\newcommand{\convprob}{\stackrel{\PP}{\rightarrow}}
\newtheorem{thm}{Theorem}[section]
\newtheorem{lem}[thm]{Lemma}
\newtheorem{prop}[thm]{Proposition}
\theoremstyle{remark}
\theoremstyle{definition}
\newtheorem{df}[thm]{Definition}
\newtheorem{assumption}{Assumption}
\numberwithin{equation}{section}
\def\R{{\mathbb{R}}}
\newcommand{\ind}{\mathbbm{1}}
\newcommand{\EE}{\mathbb{E}}
\newcommand{\PP}{\mathbb{P}}
\newcommand{\NN}{\mathbb{N}}
\newcommand{\N}{\mathbb{N}}
\newcommand{\M}{\mathbb{M}}
\newcommand{\T}{\mathbb{T}}
\newcommand{\RR}{\mathbb{R}}
\newcommand{\Acal}{\mathcal{A}}
\newcommand{\ud}{\mathrm{d}}
\newcommand{\be}{\begin{equation}}
\newcommand{\ee}{\end{equation}}
\def\ba#1\ea{\begin{align*}#1\end{align*}}
\def\ban#1\ean{\begin{align}#1\end{align}}
\renewcommand{\ud}{{\rm d}}
\newcommand{\bsymb}{\boldsymbol}
\newcommand{\Pp}{ \mathbf{V} }
\newcommand{\Tp}{ \mathbf{T} }
\newcommand{\Npa}{ \mathbf{N} }
\newcommand{\Npb}{ \mathbf{N}^* }
\newcommand{\const}{ {\rm c} } 
\newcommand{\U}{\mathbb{U}} 
\newcommand{\bR}{ \overline{\mathbb{R}} }
\newcommand{\shift}{\mathfrak{T}}
\newcommand{\scale}{\mathfrak{S}}
\newcommand{\tip}{\mathfrak{V}}
\newcommand{\sL}{\mathfrak{L}}
\begin{document}
	\title{Branching random walk and log-slowly varying tails}
	\author[A. Bhattacharya, P. Dyszewski, N. Gantert and~Z.~Palmowski]{Ayan Bhattacharya, Piotr Dyszewski, Nina Gantert and~Zbigniew~Palmowski}
	\date{\today}


	\thanks{ The research of PD was partially supported by the National Science Centre, Poland (Sonata, grant number 2020/39/D/ST1/00258).
 The research of ZP was partially supported by the National Science Centre, Poland (Opus, grant number 2021/41/B/HS4/00599). }
	\keywords{branching random walk, slowly varying functions, limit theorems, point processes, extreme values}
	\subjclass[2010]{60F10, 60J80, 60G50}

	\begin{abstract}
		We study a branching random walk with independent and identically distributed, heavy tailed displacements. The offspring law is supercritical and satisfies the Kesten-Stigum condition.
We treat the case when the law of the displacements does not lie in the max-domain of attraction of an extreme value distribution. Hence,
		the~classical extreme value theory,
		which is often deployed in this kind of models, breaks down. We show that if the tails of the displacements are such that the absolute value of the logarithm of the tail is a
		slowly varying function, one can still
		effectively analyse the extremes of the process. More precisely, after a non-linear transformation the extremes of the
branching random walk process converge to a cluster
		Cox process.
	\end{abstract}

	\maketitle

\section{Introduction}\label{sec:Introduction}

	Consider a system of particles which evolves on the real line $\R$ in the following way. In the very beginning the system consists of one
	particle placed at the origin $0$ of $\R$. At each time epoch $n \in \NN = \{ 1, 2, \ldots\}$ the particles present in the system
	split independently
	with a prescribed reproduction law, next each new particle is shifted independently from its place of birth according to a~fixed displacement distribution.
	Let $\Pp_n\subseteq \R$ denote the point process of the positions of particles during the $n$th epoch of time.
	The sequence $\{ \Pp_n\}_{n \in \NN }$ is a branching random walk (BRW). \medskip

	Since the early study done in the seventies by Biggins~\cite{76:biggins:first}, Kingman~\cite{75:kingman:first} and
	Hammersley~\cite{J.M.Hammersley1974} BRW became an active field of probability theory. The~process is a discrete time analogue of
	branching Brownian motion and thus bears connections to the Fisher-Kolmogorov-Petrovsky-Piskunov~\cite{kolmogorov1937etude, fisher1937wave} equation
	(see~\cite{mckean1975application, bovier2017gaussian} for more details).
	Apart from a natural interpretation
	of modelling fitness in a population, branching random walks are closely connected to fragmentation
	processes~\cite{kyprianou2017, dadoun:2017}, Mandelbrot cascade measures~\cite{Liu2000a,barral:2014}  and the so-called
	smoothing transform~\cite{alsmeyer:2012}. The aforementioned FKPP equation arises in the study of the extremal positions of the particles.
	For this reason the position of, say the rightmost particle $M_n = \max \Pp_n$ was one of the central objects of the probabilistic analysis of BRW.
	As one may expect the asymptotic behaviour of $M_n$ is closely related to the asymptotic properties of the displacement distribution.
	In the classical light-tailed case, when the displacement distribution has some finite exponential moments,
	$M_n$ moves at a  linear speed and after an additional logarithmic correction exhibits a weak limit which is a random shift of the
	Gumbel law~\cite{13:aidekon:convergence}. In the heavy tailed case, when the displacements have no finite exponential moments but have some
	finite power moments, one relies on the extreme value theory to show that $M_n$ grows faster
	than linear but at most exponentially with a weak limit which is a random shift of the
	Gumbel~\cite{dyszewski2022extremal} or Frechet~\cite{Durrett1983}
	law depending on the tails of the displacements. We show that one can still rely on the extreme value theory if the displacements have
	log-slowly varying tails, i.e. if the logarithm of the tail function is slowly varying.
	However the classical extreme value theory breaks down if the displacements are
	too heavy, i.e. don't have any power moments. It turns out that in this case there is no linear transformation under which one could obtain a
	limit theorem for $M_n$. In the present article we give a non-linear scale under which $M_n$ moves linearly
	with a weak limit which is a random shift of the Gumbel law. Moreover, we analyse the limiting behaviour of the extremal process for the
	BRW which describes the positions of the particle in the vicinity of the rightmost one. \medskip

	Assuming that the underlying genealogical structure forms a supercritical Galton-Watson process we will investigate the
	case of independent identically distributed (iid) displacements with the tails written in the form
	\begin{equation*}
		\PP[X>t] = a(t)e^{-L(t)},
	\end{equation*}
	for $t \in \RR$, where $a(\cdot)$ is a bounded, measurable function such that $a(t) \to a>0$ as $t \to \infty$.
	The asymptotic behaviours of $\Pp_n$ and $M_n$
	depend on the asymptotic behaviour of $L$. In order to motivate our setting we will briefly discuss some known results concerning
	limit theorems for $M_n$.\medskip

	The classical case, when the displacements have some exponential moments, corresponds to $L$ increasing at least linearly fast
	\begin{equation}\label{eq:1:thin}
		\liminf_{t \to \infty}L(t)/t >0
	\end{equation}
	and was studied in~\cite{75:kingman:first, 76:biggins:first, J.M.Hammersley1974, Hu2009, 13:aidekon:convergence}.
	Under some mild technical assumptions, one can find real, positive constants $\const_1$, $\const_2$ such that
	\begin{equation*}
		M_n - \const_1 n - \const_2 \log n
	\end{equation*}
	converges in distribution to a law with the cumulative distribution function (cdf) that can be expressed as
	\begin{equation*}
		x \mapsto \EE \left[\exp\{-D_\infty e^{-x} \} \right],
	\end{equation*}
  	where $D_\infty$ is the limit of so-called derivative martingale associated with the BRW. We refer to~\cite{Shi2015} for a self-contained treatment.
	Note that in this case the local behaviour of $L$ affects the limiting distribution.\medskip

	Moving to displacements with heavier tails, we will now discuss the log-regularly varying case.
	If $L$ is regularly varying with index $r \in (0,1)$, that is
	\begin{equation}\label{eq:1:rv}
		\lim_{t \to \infty} L(\const t)/L(t) = \const^r,
	\end{equation}
  	for any $\const>0$, the position of the rightmost particle grows polynomially in $n$. More precisely, under some regularity assumptions on $L$,
	for some slowly varying functions $\ell_1$, $\ell_2$,
	\begin{equation*}
		(M_n - n^{1/r} \ell_1(n))/ (n^{1/r-1}\ell_2(n))
	\end{equation*}
	converges in law to a distribution with cdf
	\begin{equation}\label{eq:1:mixGumbel}
		x \mapsto \EE \left[ \exp\{ - W e^{-x} \} \right],
	\end{equation}
  	where $W$ is the limiting value of the martingale associated with the underlying
	Galton-Watson process (see Section~\ref{sec:2:main} for details).
	This case was first treated in~\cite{Gantert2000} wich was followed by more recent work~\cite{dyszewski2023maximum, dyszewski2022extremal}.
	In the present paper we study a boundary case when $r=0$. This means that $L$ is slowly varying, that is,
	\begin{equation*}
		\lim_{t \to \infty} L(\const t)/L(t) = 1,
	\end{equation*}
	for any positive constant $c$.
	As one of our results shows, if the function $L$ is suplogarithmic, that is $\log t /L(t) \to 0$,
	then there are subexponentially growing sequences $\{ b_n\}_{n \in \NN}$ and
	$\{ a_n\}_{n \in \NN}$ such that
	\begin{equation*}
		(M_n - b_n)/a_n
	\end{equation*}	
	converges in law to a distribution with cdf of the form~\eqref{eq:1:mixGumbel}. Here and in what follows a sequence of real numbers
	$\{x_n\}_{n \in \NN}$ is said to be growing subexponentially if $x_n e^{-\varepsilon n} \to 0$ for any $\varepsilon >0$.
	One notable example is the lognormal distribution for which $L(t) = ( (\log t)^2 + 2 \log \log t)/2$.\medskip

	The case of logarithmically growing $L$ in essence boils down to steps with regularly varying tails. If one assumes for simplicity that
	$L(t) = \alpha \log t $ for some $\alpha>0$, then
	\begin{equation*}
		m^{-n/\alpha}M_n
	\end{equation*}
	converges in law to a random shift of a Fréchet distribution~\cite{Durrett1983}, that is, to a law with a cdf
  	\begin{equation*}
    		x \mapsto \EE \left[ \exp\{ - W x^{-\alpha} \} \right],
  	\end{equation*}
	where as previously $W$ is the martingale limit associated with the underlying Galton-Watson process. \medskip

	Finally, in the sublogarithmic case, that is when $L(t)/\log t \to 0$, the classical extreme value theory breaks down. Note that the last condition
	implies $\EE [|X|^\varepsilon] =\infty$ for any positive $\varepsilon>0$. However, one can still analyse $M_n$
	under a non-linear scale. As our main results show,
	\begin{equation*}
		L(M_n) - n \log m
	\end{equation*}
	converges in law to a distribution with a cdf given via~\eqref{eq:1:mixGumbel}.\medskip

	As it is usually the case for heavy tails, we were able to analyse not only the rightmost position $M_n$, but the joint behaviour of the rightmost
	extremes of the BRW $\Pp_n$. For this reason we formulate our main results in terms of point process convergence in a right point
	compacification of the real line.\medskip

	The article is organized as follows. In Section~\ref{sec:2:main} we give a precise set-up and the statements of our main results, Theorem \ref{thm:2:1} and
Theorem \ref{thm:L:sublog}.
	Section~\ref{sec:3:RWestimates} provides estimates for random walks used in the proofs presented in Section~\ref{sec:4:proofs}.
	Throughout the article we write $f(t) = o(g(t))$ for functions $f,g \colon \RR \to \RR$ if $f(t)/g(t) \to 0$ as $t \to \infty$. We also
	write $\const$ for a generic constant whose value is of no importance for us. Note that the actual value of $\const$ may
	change from line to line.

\section{Main results}\label{sec:2:main}

	We will now provide a detailed introduction of our model followed by the statement of our main results. One of our standing assumptions
	is that the genealogical structure is independent from the displacements and hence  we will introduce
	both components separately. We begin with the underlying Galton-Watson process, which we will introduce by explicitly describing the
	corresponding random tree. \medskip

	Denote by $\U = \{\emptyset\} \cup \bigcup_{k=1}^\infty \N^k$, with $\N = \{ 1, 2, \ldots\}$ the set of all possible labels.
	For $x,y \in \U$ write $x \leq y$ if $y_{|k} =x$ for some $k \geq 1$ and $|x|=n$ if $x \in \N^n$. Additionally, for $x \in \U$
	write $x_0 = \emptyset$ and $x_k$ for the projection of $x$ onto the $k$th coordinate for $k \leq |x|$. Consider a family
	$\{ N(x) \: : \: x \in \U \}$ of iid random variables taking values in $\N\cup\{0\}$.
	We will interpret $N(x)$ as the number of children of the particle labelled with $x \in \U$.  Define the aforementioned random tree via
	\begin{equation*}
		\T = \left\{ x \in \U \: : \: x_k \leq N(x_{k-1}) \mbox{ for all } 1\leq k \leq |x| \right\}.
	\end{equation*}
	Then $\T$ can be given a tree structure by placing an edge between any two vertices $x , y \in \T$ such that $ y = (x, k)$, $k \leq N(x)$.
	Notice that $\T$ is the genealogical tree corresponding to the Galton-Watson process
	\begin{equation*}
		Z_n = \# \T_n , \qquad \T_n=  \{ x \in \T \: : \: |x|=n \}.
	\end{equation*}
	Notice also that $Z_1 = N(\emptyset)$.
	In what follows we will impose conditions on $Z_1$ which will allow to control the growth of the process
	$\{ Z_n \}_{n \in \NN}$.

	\begin{assumption}\label{as:galton}
		We assume that the Galton-Watson process $\{Z_n \}_{n \in \NN}$ is supercritical, i.e. $\EE[Z_1]=m\in (1, \infty)$.
		Furthermore, we assume that
		$\EE[Z_1 \log^+(Z_1)] <\infty$.
	\end{assumption}
	The first imposed condition ensures that the process $\{Z_n\}_{n \in \NN}$ survives with positive probability, more precisely, we have then
	\begin{equation*}
		\PP [ Z_n \geq 1, \forall n \geq 1]>0.
	\end{equation*}
	Since our considerations become void when the population dies out, one can restrict the considerations to the set of survival and work
	under the conditional probability
	\begin{equation*}
		\PP^*[ \: \cdot \: ] = \PP [ \:  \cdot \: | \:  Z_n \geq 1, \forall n \geq 1 ].
	\end{equation*}
	One can then recast our main results under $\PP^*$.
	The second condition in Assumption~\ref{as:galton} secures a strict exponential growth of $Z_n$. To describe it precisely first note the the
	sequence $\{m^{-n}Z_n\}_{n \in \NN}$ forms a non-negative $\PP$-martingale and  therefore converges to, say,
	\begin{equation}\label{eq:2:GaltonLimit}
		\lim_{n \to \infty} m^{-n}Z_n = W \quad \PP-{\rm a.s.}
	\end{equation}
	It turns out that under the Assumption~\ref{as:galton}, $\PP^*[W>0]=1$ and the above convergence takes place in $L^1(\PP)$,
	see~\cite[Theorem I.10.1]{athreya:1972:branching}. The distribution of the limiting random variable enjoys a self-similarity property which
	will translate to an
	analogous property of the limiting measure of $\{\Pp_n\}_{n \in \NN}$. If for $x \in \T_1$ we write
	$Z_n(x) =\#\{ y \in \T \: : \: y\geq x, \: |y|=n+|x| \}$, then conditioned on $x \in \T_1$ the sequence $\{ Z_n(x)\}_{n \in \NN}$ is distributed
	as $\{Z_{n}\}_{n \in \NN}$ and thus $\lim_{n \to \infty} m^{-n} Z_n(x) = W_x$ $\PP-{\rm a.s.}$
	Using $Z_n = \sum_{|x|=1} Z_{n-1}(x)$ we infer that
	\begin{equation}\label{eq:2:ST}
		W = \frac 1m \sum_{|x|=1} W_x.
	\end{equation}
	The key feature of this formula is that $W_x$'s are iid copies of $W$ independent from the length of the sum $N = Z_1 = \# \T_1$.
	The random variable  $W$ will play the role of a shift parameter in the directing measure for the limit of $\Pp_n$. \medskip

	We turn our attention to the displacements in our model. Consider yet another family $\{ X_y \: : \: y \in \U\setminus \{ \emptyset\} \}$ of
	iid random variables. Each $X_y$ represents the  shift of the particle $y$ from its place of birth. Therefore the position $V(x)$ of the particle
	$x \in \T$ can be represented via
	\begin{equation*}
 		V(x) = \sum_{y \in (\emptyset, x]} X_y,
 	\end{equation*}
 	where $(\emptyset, x] = \{ y \in \T \: : \: y\leq x, \: y \not = \emptyset \}$. The position of the rightmost particle
	\begin{equation}\label{eq:2:max}
 		M_n = \max_{|x|=n}V(x)
 	\end{equation}
 	is the maximum of $Z_n$ correlated random walks with step distribution $X$.
 	In order to be able to analyse different order statistics of
	$\{ V(x) \}_{ |x|=n}$, one is lead to study the point process
 	\begin{equation*}
 		\Pp_n = \sum_{|x|=n}\delta_{V(x)}.
 	\end{equation*}
	In what follows $\Pp_n$ is treated as a random element of the space of non-negative Radon measures (see  Chapter~3 in \cite{resnick2008extreme}
	and section~6.1.3 in \cite{resnick2007heavy}). Let $\bR= (- \infty, \infty]$ be a homomorphic image of $(0,1]$. Denote by $\M(\bR)$
	the space of measures on $\bR$ which charge finite mass to each of the compact subsets of $\bR$.
	Let $C_c^+(\bR)$ denote the space of non-negative bounded continuous functions which have compact support.
	We say that a sequence of point measures $\{ \nu_n \}_{n \in \NN}$ converges vaguely to a measure $\nu$ on $\bR$ if
	$\lim_{n \to \infty} \nu_n(f) = \nu(f)$ where
	$\nu(f) = \int f \ud \nu$ for all $f \in C_c^+(\bR)$.
	According to Proposition~3.17 in \cite{resnick2008extreme}, $\M(\bR)$ equipped with the vague topology is a Polish space.

 	We will study the weak convergence of $\Pp_n$ as a random element of $\M(\bR)$.
	As it turns out the weak convergence in  $\M(\bR)$ can be put in relatively simple terms.
	By~\cite[Proposition 3.19]{resnick2008extreme},
	for random measures $\bsymb{\Theta}_n$, $\bsymb{\Theta}$,
	\begin{equation*}
		\bsymb{\Theta}_n  \Rightarrow \bsymb{\Theta} \quad \mbox{in $\M(\bR)$}
	\end{equation*}
	if and only if
	\begin{equation*}
		\bsymb{\Theta}_n(f) = \int f(x) \: \bsymb{\Theta}_n(\ud x) \Rightarrow
		\bsymb{\Theta}(f) = \int f(x) \: \bsymb{\Theta}(\ud x) \quad \mbox{in $\R$}
	\end{equation*}
	for any $f$ from the class $C_c^+(\bR)$. Here
	$\Rightarrow$ represents a weak convergence of random elements of a given space. The second condition is indeed
	less complex than the first one since for $f \in C_c^+(\bR)$, the integral $\int f(x) \: \bsymb{\Theta}_n(\ud x) $
	is a bona fide random variable. Since weak convergence in $\RR$ can be tested using Laplace functionals, and since $C_c^+(\bR)$ is
	closed under scaling by a positive factor, we conclude that a sequence of point processes $\bsymb{\Theta}_n$ converges weakly to
	$\bsymb{\Theta}$ in $\M(\bR)$ if
	\begin{equation*}
		\lim_{n \to \infty} \EE[ \exp \{ - \bsymb{\Theta}_n(f) \}] = \EE[ \exp \{ -\bsymb{\Theta}(f)\}]
	\end{equation*}
	for all $f \in C_c^+(\bR)$.
	We shall use Laplace functional to obtain the weak limit of $\Pp_n$.\medskip

	Aiming to provide centring and scaling for $\Pp_n$ we define the shift and scaling for point measures as follows.
	For $\nu \in \M(\bR)$ given via $\nu =\sum_i \delta_{x_i}$ let the the shift by $x \in \RR$ and scaling by $y >0$ be defined as
	\begin{equation*}
		\shift_x \nu = \sum_{i}\delta_{x_i+x}, \qquad \scale_y \nu = \sum_{i}\delta_{y x_i}.
	\end{equation*}
	Our goal is to provide sequences $\{a_n\}_{n \in \NN}$ and $\{b_n\}_{n \in \NN}$ such that
	\begin{equation*}
		\scale_{a^{-1}_n}\shift_{-b_n}\Pp_n = \sum_{|x| = n} \delta_{(V(x)-b_n)/a_n}
	\end{equation*}
	converges weakly in $\M(\bR)$ to a non-trivial limit. By the choice of the space, this will provide a description of the positions of the particles
	in the $n-$th generation in the vicinity of the rightmost one. \medskip

	If the displacements have thin tails, that is~\eqref{eq:1:thin} holds, then under some mild technical assumptions~\cite{madule2017} there are
	constants $c_1$, $c_2$ such that
	\begin{equation}\label{eq:2:coffee}
		\shift_{-c_1n-c_2\log n}\Pp_n
	\end{equation}
	converges to a randomly shifted decorated Poisson process~\cite{subag2015freezing}.

	\begin{df}\label{df:2:sdppp}
		A point process $\bsymb{\Theta}$ is called a randomly shifted decorated
		Poisson process if there exists a Radon measure $\mu$, a point process $\bsymb{\Lambda}$ and a random variable $S$ such that
		\begin{equation*}
			\bsymb{\Theta} = \sum_{k} \shift_{\xi_k+S}\bsymb{\Lambda}_k,
		\end{equation*}
		where $\sum_k\delta_{\xi_k}$ is a Poisson point process with intensity $\mu$, $\{\bsymb{\Lambda}_k\}_{k \in \NN}$ are iid copies of
		$\bsymb{\Lambda}$ such that $\sum_k\delta_{\xi_k}$, $\{\bsymb{\Lambda}_k\}_{k \in \NN}$ and $S$ are independent. In this case one writes
		$\bsymb{\Theta} \sim {\rm SDPPP}(\mu, \bsymb{\Lambda}, S )$.
	\end{df}

	It turns out that the weak limit of~\eqref{eq:2:coffee} is  ${\rm SDPPP}(e^{-x}\ud x, \bsymb{\Lambda}, c_3D_{\infty} )$ for some point
	process $\bsymb{\Lambda}$ and some constant $c_3$, see~\cite{madule2017} for details. Here $D_\infty$ is the aforementioned limit
	of so-called derivative martingale associated with BRW.\medskip

	If $L$ is regularly varying with index $r \in (0,1)$, i.e.~\eqref{eq:1:rv} holds then, under some additional regularity assumptions on $L$,
	there exist slowly varying functions $\ell_1$ and $\ell_2$,
	\begin{equation}\label{eq:2:pointstretched}
		\scale_{n^{1-1/r}\ell_2(n)^{-1}}\shift_{-n^{1/r}\ell_1(n)}\Pp_n
	\end{equation}
	converges in law to a cluster Cox process~\cite{dyszewski2022extremal}. To provide a detailed description of the limit let
	$\{A_k\}_{k\in \NN}$ be a collection of iid random variables distributed as
	\begin{equation*}
		\PP[A_k = j] = \frac 1v\sum_{l=0}^\infty m^{-l}\PP[Z_l =j],  \qquad j \in \NN,
	\end{equation*}
	where $v$ is the normalising constant
	\begin{equation*}
		v = \sum_{l=0}^\infty m^{-l}\PP[Z_l >0].
	\end{equation*}
	Let $\{ \ell_k\}_k$ be a Poisson point process on $\RR$ with intensity $e^{-x}\ud x$. The limit of~\eqref{eq:2:pointstretched} can be written as
	\begin{equation}\label{eq:2:defV}
		\Pp = \sum_{k} A_k\delta_{\ell_k - \log(vW)}
	\end{equation}
	Then $\Pp$ is a cluster Cox process with Laplace functional given via
	\begin{equation*}
		f \mapsto \EE\left[ \exp \left\{ - \sum_{l=0}^\infty m^{-l} W \int \left(1-e^{-f(x)Z_l} \right) e^{-x}\ud x \right\} \right]
	\end{equation*}
	for measurable $f \colon \RR \to [0, +\infty)$. It should be noted that $\Pp$ given in~\eqref{eq:2:defV} is
	${\rm SDPPP}(e^{-x}\ud x, A_1\delta_0, \log(v W) )$.\medskip

	We now turn to our first main result. To provide the normalization for $\Pp_n$, we need to introduce the conditions imposed on the
	step distribution. We will work in two cases namely sub- and sup-logarithmic regimes. We begin with the latter, since in this case
	the classical extreme value theory applies.

	\begin{assumption}\label{as:step1}
		We assume that the random variable $X$ has distribution of the form
		\begin{equation*}
			\PP[X>t] = a(t)e^{-L(t)}, \qquad t \in \R,
		\end{equation*}
		where $a(t) \to a>0$,  $L(\cdot)$ is $C^2$ function such that $L'$ is slowly varying and
		\begin{equation}\label{conditions-on-L}
			\lim_{x \to \infty} \frac{L''(x)}{L'(x)^2} =0, \qquad \lim_{x \to \infty} \frac{L(x)}{xL'(x) \sqrt{\log\log(L(x))}} = \infty
		\end{equation}
		and the function $x \mapsto x^{-1/3}L(x)$ is eventually decreasing. Assume moreover the following left tail condition
		\begin{equation*}
			\PP[X<-t] \leq t^{-\varepsilon}.
		\end{equation*}
		for some $\varepsilon>0$ and sufficiently large $t>0$.
	\end{assumption}%

	We note that the appearance of $\sqrt{\log\log(L(x))}$ in the second condition in \eqref{conditions-on-L} may be an artefact of our proof.
	The first condition in \eqref{conditions-on-L}
	implies in particular that $L$ grows faster than logarithmic.
	Assumption~\ref{as:step1} ensures that $e^{-L(x)}$ is a von-Mises function.
	This in turn allows us to treat $\Pp_n$
	using extreme value theory. In this spirit denote
	\begin{equation}\label{eq:2:seq}
		b_n = \inf \{ t \geq 0 \: :\: \PP[X>t]\leq m^{-n} \}, \qquad a_n = 1/L'(b_n).
	\end{equation}
	Note that under Assumption~\ref{as:step1} both $b_n$ and $a_n$ grow faster than any polynomial. For example, in the case of lognormal
	displacements where
	\begin{equation*}
		L(t) = ((\log t)^2+2\log\log t)/2,
	\end{equation*}
	one has
	\begin{multline*}
		b_n = \exp\left\{ \sqrt{2n\log m-2\log(2n\log m)}\right\}(1+o(1))\qquad  \\
			\mbox{and} \qquad a_n = b_n/\sqrt{2n \log m}(1+o(1)).
	\end{multline*}
	Taking $b_n$ and $a_n$ as in~\eqref{eq:2:seq} gives
	\begin{equation*}
		m^n \PP[X > b_n+a_n x] \to e^{-x}
	\end{equation*}
	as $n \to \infty$ for any $x \in \RR$ and furthermore the maximum of $m^n$ independent copies of $(X-b_n)/a_n$ converges in law to the
	Gumbel law. Note that at this point one uses the limiting relations in Assumption~\ref{as:step1}. We will use this limit theorem in the proof of
	our first result.
	
	\begin{thm}[The suplogarithmic case]\label{thm:2:1}
		Under Assumptions~\ref{as:galton} and \ref{as:step1}, one has for the sequences $\{a_n\}_{n \in \NN}$ and $\{b_n\}_{n \in \NN}$ given
		in~\eqref{eq:2:seq} the convergence
		\begin{equation*}
			\scale_{a^{-1}_n}\shift_{-b_n}\Pp_n \Rightarrow \Pp
		\end{equation*}
		in $\M(\bR)$, where $\Pp$ is defined in~\eqref{eq:2:defV}.
	\end{thm}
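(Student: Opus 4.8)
The plan is to prove convergence of Laplace functionals: by the reduction recorded in Section~\ref{sec:2:main} it suffices to show that for every $f \in C_c^+(\bR)$,
\[
\lim_{n\to\infty}\EE\big[\exp\{-\Pp_n^{\ast}(f)\}\big] = \EE\big[\exp\{-\Pp(f)\}\big], \qquad \Pp_n^{\ast} := \scale_{a_n^{-1}}\shift_{-b_n}\Pp_n,
\]
the right-hand side being the Laplace functional recorded just after \eqref{eq:2:defV}. Since $\bR=(-\infty,\infty]$, the support of $f$ is contained in $[K,\infty]$ for some $K\in\RR$, so $\Pp_n^{\ast}(f)$ only sees particles $x$ with $V(x)\geq b_n+Ka_n$. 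The guiding principle is the one-big-jump phenomenon: such a particle arises essentially only if exactly one edge on its ancestral line carries a displacement of size $\approx b_n$, in which case the entire subtree hanging from that edge is transported along and, after rescaling, collapses onto a single location with multiplicity equal to the size of that subtree. Because $m^n\PP[X>b_n+a_nx]\to e^{-x}$, a big jump at generation $l=n-j$ has probability of order $m^{-n}$ while there are about $Z_{n-j}\approx Wm^{n-j}$ candidate edges there; hence only $j=O(1)$ contributes, the relevant subtrees have bounded depth, and their sizes are distributed as $Z_j$ for such $j$. This is exactly where the mark law $\PP[A_k=\cdot\,]$ and the factor $W$ (through $Z_{n-j}/m^{n-j}\to W$) will enter.

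Concretely, fix $\kappa<K$ and call $y\in\U\setminus\{\emptyset\}$ \emph{big} if $X_y>b_n+\kappa a_n$. Using $\EE[Z_l]=m^l$ and the tail asymptotics, the expected number of big $y$ with $|y|\leq n$ tends to $e^{-\kappa}m/(m-1)$, so big vertices are tight. I would then prove, using the random-walk estimates of Section~\ref{sec:3:RWestimates} together with first-moment bounds over the (at most $\approx m^l$) vertices at each level $l\leq n$, that with probability tending to one: no ancestral line up to level $n$ contains two big vertices; every vertex $w$ with $|w|\leq n$ all of whose ancestral displacements are $\leq b_n+\kappa a_n$ satisfies $|V(w)|\leq\eps a_n$; and consequently no particle outside the subtree of a big vertex reaches $b_n+Ka_n$. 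I expect the last two items to be the main obstacle: a displacement truncated at level $\approx b_n$ has no exponential moments a priori, so one needs a large-deviation bound $\PP[S>\eps a_n]=o(m^{-n})$, uniformly, for $S$ a sum of at most $n$ displacements truncated at $b_n+\kappa a_n$, in order to survive the union over the $\approx m^n$ ancestral lines and over $l\leq n$; this is precisely the role of Assumption~\ref{as:step1}, and in particular of the conditions~\eqref{conditions-on-L}.

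On this good event every particle $x$ in the window lies in the subtree of a unique big vertex $y$ and satisfies $V(x)=X_y+r_x$ with $|r_x|\leq 2\eps a_n$, so by uniform continuity of $f$ and tightness of the number of particles in the window, $\Pp_n^{\ast}(f)$ differs from $\sum_{l=1}^n\sum_{|y|=l}Z_{n-l}(y)\,f\big((X_y-b_n)/a_n\big)$ by an error whose expected absolute value is $o(1)$ as $n\to\infty$ and then $\eps\to0$. Conditioning on the tree $\T$, the family $\{X_y\}$ is i.i.d.\ while each $Z_{n-l}(y)$ is $\T$-measurable, so $\EE[\exp\{-\sum_{l,|y|=l}Z_{n-l}(y)f((X_y-b_n)/a_n)\}\mid\T]$ factors as a product over $l$ and $|y|=l$ of the terms $1-\EE[1-\exp\{-Z_{n-l}(y)f((X-b_n)/a_n)\}\mid\T]$. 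Since $m^n\PP[(X-b_n)/a_n\in\cdot\,]$ converges vaguely on $(K,\infty]$ to $e^{-x}\,\ud x$, for each fixed $k$ one has $\EE[1-\exp\{-kf((X-b_n)/a_n)\}]=m^{-n}(\phi(k)+o(1))$ with $\phi(k):=\int_\RR(1-e^{-kf(x)})e^{-x}\,\ud x$; as each factor differs from $1$ by $O(m^{-n})$ and there are $\approx m^l$ of them at level $l$, the product-to-exponential estimate gives
\[
\EE\big[\exp\{-\Pp_n^{\ast}(f)\}\big] = \EE\Big[\exp\Big\{-m^{-n}\sum_{l=1}^n\sum_{|y|=l}\phi\big(Z_{n-l}(y)\big)\Big\}\Big] + o(1).
\]

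Finally I reindex $j=n-l$. The variables $\{Z_j(y)\}_{|y|=n-j}$ are identically distributed as $Z_j$, $\phi$ is bounded by $\phi(\infty)<\infty$, and $m^{-(n-j)}Z_{n-j}\to W$, so a law of large numbers along generation $n-j$ (via a conditional second-moment estimate given the first $n-j$ generations) yields $m^{-(n-j)}\sum_{|y|=n-j}\phi(Z_j(y))\to W\,\EE[\phi(Z_j)]$ for each fixed $j$; dominated convergence in $j$, using $\phi\leq\phi(\infty)$ and $\sum_j m^{-j}<\infty$, then gives
\[
\lim_{n\to\infty}\EE\big[\exp\{-\Pp_n^{\ast}(f)\}\big] = \EE\Big[\exp\Big\{-W\sum_{j=0}^\infty m^{-j}\,\EE[\phi(Z_j)]\Big\}\Big].
\]
Since $\EE[\phi(Z_j)]=\int_\RR(1-\EE[e^{-f(x)Z_j}])e^{-x}\,\ud x$, unwinding the definition of $v$ and of the mark law $\PP[A_k=\cdot\,]$ shows that the right-hand side is exactly the Laplace functional of $\Pp$ recorded after \eqref{eq:2:defV}; by the reduction in Section~\ref{sec:2:main} this establishes $\Pp_n^{\ast}\Rightarrow\Pp$ in $\M(\bR)$.
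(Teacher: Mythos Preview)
Your overall strategy---reduce to one big jump, then compute the Laplace functional by conditioning on the tree and summing over the depth of the big jump---is the same as the paper's, and the second half of your argument (the product-to-exponential step and the law of large numbers along generation $n-j$) is essentially correct and parallels Propositions~\ref{prop:4:banana}--\ref{prop:4:tn}. The gap is in the one-big-jump reduction, and it is a real one.

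You declare a vertex \emph{big} at the threshold $b_n+\kappa a_n$ and then assert that, with high probability, every vertex $w$ whose ancestral displacements are all $\le b_n+\kappa a_n$ satisfies $|V(w)|\le \eps a_n$; equivalently, you ask for the large-deviation bound $\PP[S>\eps a_n]=o(m^{-n})$ for a sum $S$ of at most $n$ steps truncated at $b_n+\kappa a_n$. This bound is false. Already for a \emph{single} truncated step,
\[
\PP\big[X>\eps a_n,\; X\le b_n+\kappa a_n\big]\;\ge\;\PP[X>\eps a_n]-\PP[X>b_n]\;=\;e^{-L(\eps a_n)}-m^{-n},
\]
and under Assumption~\ref{as:step1} one has $L(b_n)-L(\eps a_n)\to\infty$ (for the lognormal benchmark $L(t)=(\log t)^2/2$ this difference is of order $\sqrt{n}\log n$), so $\PP[X>\eps a_n]\gg m^{-n}$. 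Hence there are, after the union bound over the $\sim m^n$ edges in the tree, \emph{many} non-big displacements of size far exceeding $\eps a_n$; for the corresponding vertices $|V(w)|$ is of order $b_n$, not $\eps a_n$. Your derivation of ``no particle without a big ancestor reaches the window'' from this claim, and your control of the remainder $r_x$, therefore both collapse.

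The paper repairs this by working with a \emph{multi-level} threshold rather than a single one near $b_n$. First, with $y_n=(1-\delta)b_n$, Lemma~\ref{lem:3:small} shows only the much weaker statement $\PP[S_n>x_n,\,N_n\le y_n]=o(m^{-n})$---the truncated walk need not be $\le \eps a_n$, merely below the window. Second, Lemma~\ref{lem:3:small2} disposes of paths whose largest step lies in the intermediate zone $(y_n,\,b_n-z_n]$, where $z_n\gg a_n$. Only for paths whose largest step exceeds $b_n-z_n$ does one prove that the remainder is $o(a_n)$ (Lemma~\ref{lem:4:tea}), and there the extra factor $\PP[X>b_n-z_n]\asymp m^{-n}$ offsets the union over $\sim m^n$ particles, so a crude bound $\PP[|S_{n-1}|>\eps a_n]\le n\,\PP[|X|>\eps a_n/n]$ suffices. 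In short: the correct reduction does not try to make non-big walks \emph{small}, only to keep them \emph{out of the window}; smallness of the remainder is proved only conditionally on the presence of a genuinely big step, which is what makes the union bound close.
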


	Stating results in terms of point process convergence allows to treat joint convergence of the extremes of
	$\{V(x)\}_{|x|=n}$ by standard arguments~\cite[Section 4.6]{bhattacharya:2017:point}. We mention in passing that
	under Assumptions~\ref{as:galton} and \ref{as:step1}, for the sequences $\{a_n\}_{n \in \NN}$ and $\{b_n\}_{n \in \NN}$ given
	in~\eqref{eq:2:seq} and $M_n$ given via~\eqref{eq:2:max},
	\begin{equation*}
		\PP \left[ M_n \leq b_n+a_nx \right] = \PP \left[ \scale_{a^{-1}_n}\shift_{-b_n}\Pp_n(x, +\infty] =0 \right] \to
		\EE \left[ e^{- v W e^{-x} } \right]
	\end{equation*}
	for any real $x$. \medskip

	In the proof of Theorem~\ref{thm:2:1} given in Section~\ref{sec:4:proofs} we use the principle of one big jump to approximate $\Pp_n$ via
	\begin{equation*}
		 \Tp_n = \sum_{|x|=n} \delta_{T(x)}, \quad T(x) = \max_{y \in (\emptyset, x]} X_y.
	\end{equation*}
	Next, we use a stopping line argument. We consider
	\begin{equation*}
		\mathcal{T}_n =
		\left\{ |v|\leq n \: : \: \exists |x|=n, \: x \geq v, \: \max_{y<v}X_y \ll b_n, \: X_v \approx b_n\right\},
	\end{equation*}
	with a rigorous definition given in~\eqref{eq:4:line}. One then approximates $\Tp_n$ via
	\begin{equation*}
		\sum_{v \in \mathcal{T}_n}E_{n-|v|}(v)\delta_{X_v}, \qquad
		E_n(v) =
		\# \left\{ |x|=n+|v| \: : \:  x \geq v \right\}.
	\end{equation*}
 	The main advantage of the above representation is that
	$\{E_{n-|v|}\}_{v \in \mathcal{T}_n}$ are independent given $\mathcal{T}_n$. Now one shows two facts. Firstly,
	\begin{equation*}
		\sum_{v \in \mathcal{T}_n}\delta_{(X_v-b_n)/a_n} \Rightarrow  \sum_{k} \delta_{\ell_k - \log(vW)},
	\end{equation*}
	where  $\sum_{k} \delta_{\ell_k}$ is a Poisson point process with intensity $e^{-x}\ud x$. Secondly
	\begin{equation*}
		\{ E_{n-|v|}(v) \}_{v \in \mathcal{T}_n} \Rightarrow \{A_k\}_{k \in \NN}.
	\end{equation*}
	This in turn allows us to infer that $\scale_{a^{-1}_n}\shift_{-b_n}\Tp_n \Rightarrow \Pp$ which further implies
	$\scale_{a^{-1}_n}\shift_{-b_n}\Pp_n \Rightarrow \Pp$.
	To the best of our knowledge this is a first instance of an application of stopping-line argument in the study of BRW with
	heavy tailed displacements. We believe that this approach is robust and allows for an efficient analysis of BRW in the presence of
	heavy tails in other cases. \medskip

	Going back to the review of the literature, we mention in passing that
	in the case when $L$ is logarithmic, say $L(t) = \alpha \log t$ for $\alpha, t> 0$,
	it is known~\cite{bhattacharya:2017:point}
	that
	\begin{equation*}
		\scale_{m^{-n/\alpha}}\Pp_n \Rightarrow \sum_{k} A_k\delta_{(vW)^{1/\alpha}\xi_k },
	\end{equation*}
	where $\sum_{k}\delta_{\xi_k}$ is a Poisson point process with intensity $\ind_{\{x \geq 0\}} \alpha x^{-\alpha-1}\ud x$.

	\begin{assumption}\label{as:step2}
		We assume that the random variable $X$ is non-negative and has distribution of the form
		\begin{equation*}
			\PP[X>t] = e^{-L(t)}, \qquad t \in \R,
		\end{equation*}
		where $L(\cdot)$ is slowly varying such that $L(x) = o(\log x)$ as $x \to \infty$.
	\end{assumption}%

	To study $\Pp_n$ under the above conditions we need to use a non-linear scale. For $\nu \in \M(\bR)$ given via $\nu =\sum_i \delta_{x_i}$ let
	\begin{equation*}
		\sL \nu = \sum_{i}\delta_{L(x_i)}.
	\end{equation*}
	\begin{thm}[The sublogarithmic case]\label{thm:L:sublog}
		Under the Assumptions~\ref{as:galton} and \ref{as:step2}
		\begin{equation*}
			\shift_{-n \log m}\sL\Pp_n \Rightarrow \Pp
		\end{equation*}
		in $\M(\bR)$ where $\Pp$ is the point process given in~\eqref{eq:2:defV}.
	\end{thm}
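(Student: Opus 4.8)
The plan is to mirror the architecture of the proof of Theorem~\ref{thm:2:1}: first replace $\Pp_n$ by the one‑big‑jump point process $\Tp_n=\sum_{|x|=n}\delta_{T(x)}$, and then analyse $\Tp_n$ through the first‑big‑jump stopping line. The observation that singles out the non‑linear scale $\shift_{-n\log m}\sL$ is the \emph{exact} identity
\[
	m^n\,\PP\bigl[X>L^{-1}(n\log m+z)\bigr]=m^n\,e^{-(n\log m+z)}=e^{-z},\qquad z\in\R,
\]
where $L^{-1}$ is a generalized inverse of the non‑decreasing function $L$; this plays for $\shift_{-n\log m}\sL$ precisely the role that the asymptotic relation $m^n\PP[X>b_n+a_nx]\to e^{-x}$ plays for $\scale_{a_n^{-1}}\shift_{-b_n}$ in the suplogarithmic case. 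Since $L(V(x))-n\log m\le z$ is the same as $V(x)\le L^{-1}(n\log m+z)$, the random measure $\shift_{-n\log m}\sL\Pp_n$ is exactly the image of $\Pp_n$ under $t\mapsto L(t)-n\log m$, and on the window that matters this map mimics the affine scalings used before. Throughout one argues via Laplace functionals, testing against $f\in C_c^+(\bR)$.

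\emph{Step 1 (passage to $\Tp_n$).} Under the standing assumptions the tail $e^{-L(t)}$ is slowly varying, hence subexponential. Fix $\delta\in(0,\tfrac12)$ and $C>0$ and take $f$ supported in $(-C,\infty]$. A union bound over the $Z_n$ generation‑$n$ paths shows that, with probability tending to one, no root‑to‑generation‑$n$ path carries two displacements exceeding $L^{-1}((1-\delta)n\log m)$; on this event any path with $L(V(x))\ge n\log m-C$ or $L(T(x))\ge n\log m-C$ carries exactly one displacement $X_{v(x)}$ above that level, and this $X_{v(x)}$ equals $T(x)$ while $V(x)=X_{v(x)}+R(x)$ with $0\le R(x)\le n\,L^{-1}((1-\delta)n\log m)$. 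Because $L(t)=o(\log t)$ forces $L^{-1}$ to grow faster than any polynomial, $X_{v(x)}\gg R(x)$, whence $V(x)=X_{v(x)}(1+o(1))$; slow variation of $L$ then yields $L(V(x))-L(T(x))\to 0$ uniformly over such paths, of which there are only $O(1)$. Consequently $\shift_{-n\log m}\sL\Pp_n(f)-\shift_{-n\log m}\sL\Tp_n(f)\to 0$ in probability, so it suffices to treat $\Tp_n$.

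\emph{Step 2 (stopping‑line decomposition of $\Tp_n$).} Use the first‑big‑jump stopping line $\mathcal T_n$ from the proof sketch of Theorem~\ref{thm:2:1}, now with threshold at scale $L^{-1}((1-\delta)n\log m)$, and approximate
\[
	\shift_{-n\log m}\sL\Tp_n\ \approx\ \sum_{v\in\mathcal T_n}E_{n-|v|}(v)\,\delta_{L(X_v)-n\log m}.
\]
Two convergences remain. First, $\sum_{v\in\mathcal T_n}\delta_{L(X_v)-n\log m}\Rightarrow\sum_k\delta_{\ell_k-\log(vW)}$, a Cox process of intensity $vW\,e^{-x}\,\ud x$: conditionally on the tree, the number of $v$ with $|v|=l$, clean ancestry, a generation‑$n$ descendant and $L(X_v)-n\log m\in(a,b]$ has mean asymptotically $\sum_{k\ge0}m^{-k}\bigl(m^{-(n-k)}Z_{n-k}\bigr)\PP[Z_k>0]\,(e^{-a}-e^{-b})$, which by \eqref{eq:2:GaltonLimit} and the definition of the constant $v$ tends to $vW(e^{-a}-e^{-b})$; a Poisson limit theorem adapted to the branching structure upgrades this to the stated convergence. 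Second, $\{E_{n-|v|}(v)\}_{v\in\mathcal T_n}\Rightarrow\{A_k\}_{k\in\NN}$: by the stopping‑line (strong‑Markov) property the subtrees rooted at distinct vertices of $\mathcal T_n$ are independent and independent of $\{X_v\}_{v\in\mathcal T_n}$, the quantity $E_{n-|v|}(v)$ is distributed as $Z_{n-|v|}$ conditioned to be positive, and the height from the bottom $n-|v|=k$ occurs with asymptotic probability $m^{-k}\PP[Z_k>0]/v$, so $E_{n-|v|}(v)$ converges in law to $A_1$ with $\PP[A_1=j]=v^{-1}\sum_l m^{-l}\PP[Z_l=j]$, independently across $v\in\mathcal T_n$ and of the positions. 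Superposing the two limits, $\shift_{-n\log m}\sL\Tp_n\Rightarrow\sum_kA_k\delta_{\ell_k-\log(vW)}=\Pp$ from \eqref{eq:2:defV}, which together with Step 1 proves the theorem; evaluating on the set $(x,\infty]$ one also recovers $\PP[L(M_n)\le n\log m+x]\to\EE[e^{-vW e^{-x}}]$.

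\emph{Main obstacle.} The scale itself is painless thanks to the exact identity above; the delicate part is the stopping‑line bookkeeping. One must make the conditional‑independence structure on $\mathcal T_n$ rigorous and, simultaneously, control all approximation errors uniformly in $n$: that $\Tp_n$ is faithfully represented by $\sum_{v\in\mathcal T_n}E_{n-|v|}(v)\delta_{X_v}$ on the relevant window, that along every counted path the ancestral displacements stay below the threshold and the residual is negligible after $L$ is applied, that the number of counted vertices at each height is tight, and that the $A_k$‑marks decouple from the positions. A secondary subtlety, already visible in Step~1, is the interplay of the limit $n\to\infty$ with the astronomically large scale $L^{-1}(n\log m)$ when establishing $L(V(x))-L(T(x))\to0$; here it is essential that $L(t)=o(\log t)$ makes $L^{-1}$ super‑polynomially large, so that multiplying its argument by $n$ is invisible on the $L$‑scale.
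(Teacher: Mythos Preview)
Your architecture matches the paper's exactly: pass to $\Tp_n$, use a first-big-jump stopping line at level $(1-\delta)n\log m$ on the $L$-scale, and identify the limit via the two-layer Cox structure just as in Propositions~\ref{prop:4:banana}--\ref{prop:4:tn}.

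There is, however, one genuine slip in Step~1. From $V(x)=T(x)(1+o(1))$, slow variation of $L$ alone gives only $L(V(x))/L(T(x))\to 1$; since $L(T(x))\sim n\log m\to\infty$, this yields merely $L(V(x))-L(T(x))=o(n)$, not convergence to $0$. The paper's argument (the lemma controlling $M_n^{(B)}$) is necessarily quantitative: Potter bounds give $L(x+y)-L(x)\le C\delta'\,L(x)\,y/x$, hence $L(V(x))-L(T(x))\le C\delta'\,L(T(x))\,R(x)/T(x)$, and one must show that $n\cdot R(x)/T(x)\to 0$, i.e.\ that $L^{-1}(n\log m-C)/L^{-1}((1-\delta)n\log m)$ beats $n^2$. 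Your ``main obstacle'' paragraph names exactly the right mechanism --- $L(t)=o(\log t)$ forces $L^{-1}$ to grow super-exponentially --- but this, not bare slow variation, is what has to be invoked at that point in Step~1.

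A related omission: you assert that on the no-two-big-jumps event every path reaching the window carries \emph{at least one} big jump, but do not justify it. The paper proves this separately by an exponential Chebyshev estimate; alternatively, once the ratio $L^{-1}(n\log m-C)/L^{-1}((1-\delta)n\log m)\gg n$ is established, it also follows from your deterministic bound $V(x)\le n\,L^{-1}((1-\delta)n\log m)$ together with monotonicity of $L$.
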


	As it was the case for Theorem~\ref{thm:2:1}, we may infer limit theorems for the extremes.
	For example under Assumptions~\ref{as:galton} and \ref{as:step2}, for $M_n$ given via~\eqref{eq:2:max},
	\begin{equation*}
		\PP \left[ L(M_n) \leq n\log m +x \right] = \PP \left[\shift_{-n\log m}\sL\Pp_n(x, +\infty] =0 \right] \to
		\EE \left[ e^{- v W e^{-x} } \right]
	\end{equation*}
	for any real $x$. The above result goes in line with limit theorem for random walks with increments having log-slowly varying
	tails where one has to apply a non-linear scale~\cite[Theorem 4.1]{darling1952influence}. \medskip

	The proof of Theorem~\ref{thm:L:sublog}
	goes along the same lines as the arguments used for Theorem ~\ref{thm:2:1}. It turns out that the stopping line argument
	can be used in this case as well with the stopping line of the form
	\begin{multline*}
		\mathcal{S}_n =
		\Big\{ |v|\leq n \: : \: \exists |x|=n, \: x \geq v, \\ \max_{y<v}L(X_y) \ll n \log m , \: L(X_v) \approx  n\log m \Big\}.
	\end{multline*}

	We finally mention that the limiting point process $\Pp$ given in~\eqref{eq:2:defV} as a randomly shifted decorated Poisson process is
	superposable \cite[Section 3.2]{brunet:derrida:2011}, which
	means that a union of independent copies of $\Pp$ when view from the point of the rightmost particle has the same law as $\Pp$ viewed from
	the position of the rightmost particle.
	As in~\cite[Section 3]{dyszewski2022extremal}
	this can be also seen directly by appealing to~\eqref{eq:2:ST} which implies that
	\begin{equation*}
		\sum_{k=1}^{Z_1} \Pp^{(k)} \stackrel{d}{=} \shift_{\log m}\Pp,
	\end{equation*}
	where $\Pp^{(k)}$ are iid copies of $\Pp$ independent of $Z_1$. If we thus define
	\begin{equation*}
		\tip \sum_{i}\delta_{x_i} =
		\left\{ \begin{array}{cc}\sum_i\delta_{x_i-\max_j x_j} & \mbox{if } \max_jx_j<\infty \\ o & \mbox{otherwise}\end{array} \right.,
	\end{equation*}
	where $o$ denotes the null measure, we see that
	\begin{equation*}
		\tip\sum_{k=1}^{Z_1} \Pp^{(k)} \stackrel{d}{=} \tip\Pp.
	\end{equation*}
	Furthermore, if the branching is deterministic $\Pp$ is exponentially stable~\cite{maillard:2013} (see \cite[Remark 3.5]{dyszewski2022extremal}
	for details).

\section{Random walk estimates}\label{sec:3:RWestimates}

	In this section we will present a couple of large deviation estimates for random walks with steps distributed as in
	Assumption~\ref{as:step1} or Assumption~\ref{as:step2}.
	We begin with some auxiliary estimates on the truncated exponential moments for random variables with logarithmically slowly varying tails.
	These estimates will be later used to establish deviation estimates for the corresponding random walk.
	In the next two lemmas we assume that
	\begin{equation*}
		\PP[X>t] = e^{-L(t),}
	\end{equation*}
	for a slowly varying function $L(\cdot)$.

\begin{lem}\label{lem:3:trunk}
	Suppose that for some $\xi \in (0,1)$, the function $t \mapsto t^{-\xi}L(t)$ is eventually decreasing. Then for any $\gamma \in (0,1)$ and
	$y>0$ one has
	\begin{equation*}
		\EE\left[e^{\gamma L(y) X/y}\ind_{\{ X \leq y\}}\right] \leq
		1 + (1+o(1))L(y)^{(1-1/\xi)/2}.
	\end{equation*}	
\end{lem}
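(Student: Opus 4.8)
The plan is to reduce the estimate to a bound for a purely deterministic integral and then treat that integral by a Laplace‑type argument. Throughout, fix $\gamma\in(0,1)$ and set $\theta=\theta_y=\gamma L(y)/y$, which is positive for all large $y$ (and recall that the case of bounded $L$ is trivial, the left‑hand side then converging to $\PP[X<\infty]\le 1$; so we may assume $L(y)\to\infty$). Using the identity $e^{\theta x}=1+\theta\int_0^x e^{\theta t}\,\ud t$ for $x\ge 0$, Tonelli's theorem, the bound $\PP[t<X\le y]\le\PP[X>t]=e^{-L(t)}$, and — if $X$ is not assumed non‑negative — the trivial estimate $e^{\theta X}\ind_{\{X\le 0\}}\le \ind_{\{X\le 0\}}$, I would obtain
\[
	\EE\bigl[e^{\theta X}\ind_{\{X\le y\}}\bigr]\ \le\ 1+\theta\int_0^y e^{\theta t-L(t)}\,\ud t .
\]
It then remains to show that $\theta\int_0^y e^{\theta t-L(t)}\,\ud t=o\bigl(L(y)^{(1-1/\xi)/2}\bigr)$ as $y\to\infty$; note the exponent $(1-1/\xi)/2$ is negative since $\xi<1$.

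To estimate this integral, let $t_0$ be a constant past which $t\mapsto t^{-\xi}L(t)$ is non‑increasing, and take $y>t_0$. On $[0,t_0]$ one has $L\ge 0$, so
\[
	\theta\int_0^{t_0}e^{\theta t-L(t)}\,\ud t\ \le\ \theta t_0 e^{\theta t_0}\ =\ O\!\left(\frac{L(y)}{y}\right),
\]
because $\theta t_0=\gamma t_0 L(y)/y\to 0$. On $[t_0,y]$ the hypothesis gives $t^{-\xi}L(t)\ge y^{-\xi}L(y)$, i.e. $L(t)\ge(t/y)^\xi L(y)$; substituting $u=t/y$ and using the elementary inequality $u\le u^\xi$, valid for $u\in[0,1]$,
\[
	\theta t-L(t)\ \le\ L(y)\bigl(\gamma u-u^\xi\bigr)\ \le\ -(1-\gamma)L(y)\,u^\xi ,
\]
so that, bounding the resulting truncated integral by the full one over $(0,\infty)$ and substituting $w=(1-\gamma)L(y)u^\xi$,
\[
	\theta\int_{t_0}^y e^{\theta t-L(t)}\,\ud t\ \le\ \gamma L(y)\int_0^\infty e^{-(1-\gamma)L(y)u^\xi}\,\ud u\ =\ \frac{\gamma\,\Gamma(1/\xi)}{\xi\,(1-\gamma)^{1/\xi}}\,L(y)^{1-1/\xi}.
\]

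Finally I would combine the two pieces. Since $L$ is slowly varying, $L(y)^{s}/y\to 0$ for every fixed $s$, so in particular $L(y)/y=o\bigl(L(y)^{(1-1/\xi)/2}\bigr)$; and because $\xi<1$ we have $1-1/\xi<(1-1/\xi)/2<0$, whence $L(y)^{1-1/\xi}=o\bigl(L(y)^{(1-1/\xi)/2}\bigr)$ as $L(y)\to\infty$. Therefore $\theta\int_0^y e^{\theta t-L(t)}\,\ud t=o\bigl(L(y)^{(1-1/\xi)/2}\bigr)$, which is in fact somewhat stronger than the stated inequality (the factor $1/2$ in the exponent, like the $\sqrt{\log\log}$ elsewhere, seems to be an artefact of a more robust formulation). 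The argument is essentially routine; the one step that needs genuine care is the Laplace‑type estimate of the deterministic integral — one must use the one‑sided hypothesis on $t^{-\xi}L(t)$, which holds only beyond the threshold $t_0$, precisely to manufacture the stretched‑exponential decay in $u$ that controls the contribution of small $t$, where $e^{\theta t-L(t)}$ is of order one and where essentially all the mass of the integral concentrates. Keeping the threshold $t_0$ and the restriction to large $y$ explicit, together with the harmless disposal of $\{X\le 0\}$, then completes the proof.
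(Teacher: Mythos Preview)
Your argument is correct, and it is in fact a cleaner variant of the paper's. Both proofs begin identically: split off $\{X\le 0\}$ trivially and reduce the $\{0<X\le y\}$ contribution to the deterministic integral $\theta\int_0^y e^{\theta t-L(t)}\,\ud t$ via Tonelli/integration by parts. The difference is in how the integral is handled. The paper substitutes $u=t/y$ and splits $(0,1)$ into three $y$-dependent pieces $(0,m_y)$, $[m_y,M_y)$, $[M_y,1)$ with $m_y=L(y)^{-1-\varepsilon}$, $\varepsilon=(1/\xi-1)/2$; the leading term $(1+o(1))L(y)^{(1-1/\xi)/2}$ comes from the trivial bound on the tiny interval $(0,m_y)$, and the monotonicity hypothesis is used only on the outermost piece. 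You instead split at the fixed threshold $t_0$, apply the monotonicity hypothesis $L(t)\ge (t/y)^\xi L(y)$ on all of $[t_0,y]$, and evaluate the resulting stretched-exponential integral as a Gamma function. This buys you one fewer case and a genuinely sharper estimate: your bound is $O\!\bigl(L(y)^{1-1/\xi}\bigr)$, which is $o\!\bigl(L(y)^{(1-1/\xi)/2}\bigr)$, confirming your remark that the exponent $(1-1/\xi)/2$ in the statement is an artefact of the paper's decomposition rather than intrinsic. The paper's three-piece argument has the mild advantage of avoiding the Gamma-integral computation, but yours is the more efficient route.
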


\begin{proof}
	Write $s= \gamma L(y)/y$. Firstly note that we have a simple bound
	\begin{equation*}
		\EE\left[e^{s X}\ind_{\{ X < 0\}}\right] \leq \PP \left[ X<0 \right].
	\end{equation*}
	Secondly, by integrating by parts we get
	\begin{multline*}
		\EE\left[e^{sX}\ind_{\{X \in [0,y]\}}\right] =  \int_0^y se^{st}\PP[X>t]\ud t + \PP[X \geq 0] - e^{sy}\PP[X>y] \\
		\leq \int_0^y se^{st}\PP[X>t]\ud t + \PP[X \geq 0] .
	\end{multline*}
	For the integral we have
	\begin{equation*}
		\int_0^y se^{st}\PP[X>t]\ud t = \int_0^1 \gamma L(y) e^{\gamma t L(y) - L(ty)}\ud t.
	\end{equation*}
	Put $\varepsilon = (1/\xi-1)/2$ and
	take $m_y=L(y)^{-1-\epsilon}$ and $M_y = L(y/L(y)^{1+\varepsilon})/L(y)$.
	We can write the integral on the right-hand side of the last display as a sum of integrals over the intervals $(0, m_y)$, $[m_y, M_y)$ and $[M_y,1)$.
	The first one is easily
	bounded via $(1+o(1))L(y)^{-\varepsilon}$. The second one can be estimated by
	\begin{equation*}
		L(y) \exp \{ (\gamma -1) L(y/L(y)^{1+\varepsilon}) \}.
	\end{equation*}
	Note that under the proviso concerning the monotonicity, for sufficiently large $y$,
	\begin{equation*}
		L(y/L(y)^{1+\varepsilon}) \geq L(y)^{(1- \xi)/2}.
	\end{equation*}
	Lastly, for the third interval we have, using the above inequality in combination with the estimate $L(ty) \geq t^\xi L(y)\geq tL(y)$ for
	$t \in (0,1)$ we get
	\begin{multline*}
		\int_{M_y}^1 L(y) e^{\gamma t L(y) - L(ty)}\ud t \leq \int_{M_y}^1 L(y) e^{(\gamma -1)t L(y)}\ud t \\
			\leq (1-\gamma)^{-1}e^{(\gamma-1)L(y)^{(1- \xi)/2}}.
	\end{multline*}
	Combining all the above estimates yields our claim.
\end{proof}

\begin{lem}\label{lem:3:tree}
	Let Assumption~\ref{as:step1} be in force.
	Then for any $\gamma \in (0,1)$ there exists $x$ sufficiently large such that for any $y> x/2$ and $z\leq x/2$ one has
	\begin{multline*}
		\EE\left[e^{ \gamma L(y) X/y}\ind_{\{ X \in (y,x-z]\}}\right] \leq \\ \const e^{-L(y)} e^{(\gamma L(y)/y -L'(y)/3)(x-z) + yL'(y)/3} +  e^{(\gamma-1) L(y)}.
	\end{multline*}	
\end{lem}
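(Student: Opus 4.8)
The plan is to estimate this truncated moment by a routine integration by parts, insert a linear lower bound for $L$ that is valid on the relevant range (coming from the slow variation of $L'$), and then observe that the resulting exponential integral is governed by the value of its integrand at the right endpoint $x-z$; it is in this last step that the growth condition in Assumption~\ref{as:step1} is genuinely used.

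First I would set $s=\gamma L(y)/y$, exactly as in the proof of Lemma~\ref{lem:3:trunk}, and note that if $x-z\le y$ the interval is empty and there is nothing to prove, so assume $x-z>y$. Writing $\overline{F}(t)=\PP[X>t]=a(t)e^{-L(t)}\le\const\, e^{-L(t)}$ and integrating by parts (or applying Tonelli to $e^{st}=e^{sy}+\int_y^{t}se^{su}\,\ud u$) gives
\begin{multline*}
	\EE\left[e^{sX}\ind_{\{X\in(y,x-z]\}}\right]
	\le e^{sy}\overline{F}(y)+s\int_y^{x-z}e^{st}\overline{F}(t)\,\ud t\\
	\le \const\, e^{(\gamma-1)L(y)}+\const\, s\int_y^{x-z}e^{st-L(t)}\,\ud t,
\end{multline*}
where the first summand comes from $e^{sy}\overline{F}(y)\le\const\, e^{sy-L(y)}=\const\, e^{(\gamma-1)L(y)}$ (using $sy=\gamma L(y)$); this already produces the second term in the claim.

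Next I would bound the remaining integral. Since $y>x/2$ (and $0\le z\le x/2$, so $x-z\le x<2y$), the integration variable stays in $[y,2y]$; since $L'$ is slowly varying, the uniform convergence theorem yields $L'(u)\ge L'(y)/3$ for all $u\in[y,2y]$ once $x$, hence $y$, is large enough, and integrating gives $L(t)\ge L(y)+\tfrac13 L'(y)(t-y)$ on $[y,x-z]$. Substituting,
\begin{equation*}
	s\int_y^{x-z}e^{st-L(t)}\,\ud t\le s\, e^{-L(y)+\frac13 yL'(y)}\int_y^{x-z}e^{\left(s-\frac13 L'(y)\right)t}\,\ud t.
\end{equation*}
Now the second condition in~\eqref{conditions-on-L} forces $xL'(x)/L(x)\to0$, hence $\tfrac{L'(y)/3}{s}=\tfrac{yL'(y)}{3\gamma L(y)}\to0$; in particular $s-\tfrac13 L'(y)>0$ and $\tfrac{s}{s-\frac13 L'(y)}\to1$ as $x\to\infty$. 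Consequently $\int_y^{x-z}e^{(s-\frac13 L'(y))t}\,\ud t\le\bigl(s-\tfrac13 L'(y)\bigr)^{-1}e^{(s-\frac13 L'(y))(x-z)}$, and multiplying back the prefactors, absorbing $s/(s-\tfrac13 L'(y))$ into $\const$, and recombining the exponents with $s=\gamma L(y)/y$ yields
\begin{equation*}
	s\int_y^{x-z}e^{st-L(t)}\,\ud t\le\const\, e^{-L(y)}\,e^{(\gamma L(y)/y-L'(y)/3)(x-z)+yL'(y)/3},
\end{equation*}
which is the first term in the claim. Adding the two estimates completes the argument.

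The step I expect to be delicate is precisely the last one: a priori the exponential integral could be of order $(x-z)\,e^{(s-L'(y)/3)(x-z)}$, contributing a factor growing with $L(y)$, and it is only the growth hypothesis in Assumption~\ref{as:step1} — which guarantees that $s-\tfrac13 L'(y)$ is bounded below by a fixed multiple of $s$, and hence keeps the prefactor a genuine constant — that rules this out. Everything else is routine: the integration by parts, the bound $a(t)\le\const$, and the linear minorant for $L$ obtained from slow variation of $L'$.
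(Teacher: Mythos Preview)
Your proof is correct and follows essentially the same route as the paper's: integration by parts to split off the boundary term $e^{(\gamma-1)L(y)}$, then the linear minorant $L(t)\ge L(y)+\tfrac13 L'(y)(t-y)$ on $[y,x-z]\subset[y,2y]$ from slow variation of $L'$, and finally bounding the resulting exponential integral by its value at the right endpoint. The only difference is that you make explicit the reason the prefactor $s/(s-\tfrac13 L'(y))$ is bounded (namely $yL'(y)/L(y)\to 0$ from~\eqref{conditions-on-L}), which the paper leaves implicit.
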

\begin{proof}
	Put $s= \gamma L(y)/y$.
	By yet another appeal to the integration by parts formula,
	\begin{multline*}
		\EE\left[e^{sX}\ind_{X \in (y,x-z]}\right] =  \int_y^{x-z} se^{st}\PP[X>t]\ud t + e^{sy}\PP[X>y]  + \\- e^{s(x-z)}\PP[X>x-z]
		\leq  \int_y^{x-z} se^{st}\PP[X>t]\ud t + e^{sy}\PP[X>y] .
	\end{multline*}
	The second term present in the last display is equal to $\exp\{(\gamma-1)L(y) \}$. To estimate the integral first write
	\begin{equation*}
		\int_y^{x-z} se^{st}\PP[X>t]\ud t = \int_{y}^{x-z} \gamma \frac{L(y)}y e^{\gamma L(y)t/y - L(t)}\ud t.
	\end{equation*}
	By the mean value theorem and regular variation of $L'$, for sufficiently large $x$,
	\begin{equation*}
		L(t) - L(y) = (t-y)L'(\theta_t) \geq (t-y)L'(y)/3, \qquad t \in (y,x-z), \: \theta_t \in (y,t).
	\end{equation*}
	Therefore
	\begin{equation*}
		\int_{y}^{x-z} \gamma \frac{L(y)}y e^{\gamma L(y)t/y - L(t)}\ud t \leq \const e^{-L(y)} e^{(\gamma L(y)/y -L'(y)/3)(x-z) + yL'(y)/3}.
	\end{equation*}
\end{proof}

\subsection{The suplogarithmic case}

Let $\{ S_n \}_{n \in \NN}$ be a random walk generated by $X$. That is, $S_0=0$ and for $n \geq 1$, $S_n = X_1+X_2+\ldots +X_n$, where
$\{X_k\}_{k \in \NN}$ are iid copies of $X$ with a law satisfying Assumption \ref{as:step1}. We will denote also $N_n = \max_{k \leq n}X_k$.
In what follows we suppose that the Assumption~\ref{as:step1} is in force.  Write
\begin{equation*}
	x_n=x_n(K) = b_n+Ka_n, \qquad y_n = (1-\delta) b_n,
\end{equation*}
where $K \in \RR$, $\delta \in (0,1)$ are fixed and
\begin{equation*}
	b_n = \inf \{ t \geq 0 \: :\: \PP[X>t]\leq m^{-n} \}, \qquad a_n = 1/L'(b_n).
\end{equation*}

\begin{lem}\label{lem:3:small}
	Suppose that for some $\xi \in (0,1)$, the function $t \mapsto t^{-\xi}L(t)$ is eventually decreasing.
	Then, for any $K\in \RR$ and $\delta\in (0,1)$,
	\begin{equation*}
		\PP[S_n>x_n, \: N_n \leq y_n] = o\left(m^{-n}\right).
	\end{equation*}
\end{lem}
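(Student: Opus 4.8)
The plan is to show that the event $\{S_n > x_n, N_n \leq y_n\}$ requires the random walk to reach the subexponentially large level $x_n$ without any single increment exceeding $y_n = (1-\delta)b_n$, and to rule this out via an exponential Chernoff bound with a carefully tuned parameter. Since all steps are truncated below $y_n$ on this event, I would first bound
\begin{equation*}
	\PP[S_n > x_n, \, N_n \leq y_n] \leq e^{-s x_n}\, \EE\!\left[e^{sX}\ind_{\{X \leq y_n\}}\right]^n,
\end{equation*}
valid for any $s > 0$. The natural choice, matching the truncation level, is $s = s_n = \gamma L(y_n)/y_n$ for a fixed $\gamma \in (0,1)$; this is exactly the parameter appearing in Lemma~\ref{lem:3:trunk}. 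Applying that lemma with $y = y_n$ gives
\begin{equation*}
	\EE\!\left[e^{s_n X}\ind_{\{X \leq y_n\}}\right] \leq 1 + (1+o(1)) L(y_n)^{(1-1/\xi)/2},
\end{equation*}
and since $(1-1/\xi)/2 < 0$ the correction term tends to $0$; raising to the $n$th power and using $(1+\eps_n)^n \leq e^{n\eps_n}$ yields a bound of the form $\exp\{ (1+o(1)) n L(y_n)^{(1-1/\xi)/2} \}$.

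The crux is then the comparison between the gain $e^{-s_n x_n}$ and this subexponential error. First I would record the elementary consequences of Assumption~\ref{as:step1} and the definition \eqref{eq:2:seq}: by construction $L(b_n) = n\log m + O(\log n) = (1+o(1)) n \log m$, so $n \asymp L(b_n)/\log m$, while $x_n = b_n + Ka_n = b_n(1+o(1))$ since $a_n = 1/L'(b_n) = o(b_n)$ (the first condition in \eqref{conditions-on-L} forces $L$, hence $L'$ via slow variation of $L'$, to make $b_n L'(b_n) \to \infty$, so $a_n/b_n \to 0$). Using slow variation of $L$ and $L'$ to transfer from $b_n$ to $y_n = (1-\delta)b_n$: $L(y_n) \sim L(b_n)$ and $L'(y_n) \sim L'(b_n)$, hence
\begin{equation*}
	s_n x_n = \gamma \frac{L(y_n)}{y_n} x_n = \gamma \frac{L(y_n)}{y_n}\, b_n (1+o(1)) = \frac{\gamma}{1-\delta}\, L(y_n)(1+o(1)) = \frac{\gamma}{1-\delta}(1+o(1))\, n\log m.
\end{equation*}
Choosing $\gamma$ close enough to $1$ that $\gamma/(1-\delta) > 1$ (possible since $\delta \in (0,1)$ is fixed and $\gamma$ may be taken arbitrarily close to $1$), we obtain $e^{-s_n x_n} \leq m^{-(1+\eta)n}$ for some $\eta > 0$ and all large $n$. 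Meanwhile $n L(y_n)^{(1-1/\xi)/2} \to 0$ because $L(y_n) \to \infty$ while $n = \Theta(L(b_n)) = \Theta(L(y_n))$ and the exponent $(1-1/\xi)/2$ is negative — indeed $n L(y_n)^{(1-1/\xi)/2} \asymp L(y_n)^{1 + (1-1/\xi)/2} = L(y_n)^{(3 - 1/\xi)/2}$, which tends to $0$ precisely when $\xi < 1/3$; here I would invoke the hypothesis that $x\mapsto x^{-1/3}L(x)$ is eventually decreasing, which is exactly the statement that we may take $\xi = 1/3$ (or any slightly smaller value) in Lemma~\ref{lem:3:trunk}.

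Putting the pieces together gives
\begin{equation*}
	\PP[S_n > x_n, \, N_n \leq y_n] \leq \exp\{ -s_n x_n + (1+o(1)) n L(y_n)^{(1-1/\xi)/2} \} \leq m^{-(1+\eta)n} e^{o(1)} = o(m^{-n}),
\end{equation*}
which is the claim. The main obstacle I anticipate is the bookkeeping in the second step: one must check carefully that the parameters $\gamma$ and $\delta$ can be arranged so that $\gamma/(1-\delta) > 1$ strictly while simultaneously $\xi$ can be taken small enough (below $1/3$) for the error term to be genuinely negligible, and that the slowly varying functions $L$, $L'$ transfer cleanly between the scales $b_n$ and $y_n$ with only multiplicative $(1+o(1))$ errors — this last point uses the uniform convergence theorem for slowly varying functions on the ray $[y_n, b_n]$, whose ratio is the fixed constant $1/(1-\delta)$, so uniformity is automatic. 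No delicate estimate beyond Lemma~\ref{lem:3:trunk} and elementary properties of \eqref{eq:2:seq} is needed.
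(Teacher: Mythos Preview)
Your approach is essentially identical to the paper's: the same Chernoff bound with $s=\gamma L(y_n)/y_n$, the same appeal to Lemma~\ref{lem:3:trunk}, and the same computation $s_n x_n \sim \frac{\gamma}{1-\delta}\, n\log m$ followed by taking $\gamma$ close to~$1$.

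There is, however, a small flaw in your treatment of the error term. You insist that $n\,L(y_n)^{(1-1/\xi)/2}\to 0$, which forces $\xi<1/3$, and then try to obtain this from the hypothesis that $x\mapsto x^{-1/3}L(x)$ is eventually decreasing. That hypothesis only delivers $\xi=1/3$; the implication ``$t^{-1/3}L(t)$ decreasing $\Rightarrow$ $t^{-\xi}L(t)$ decreasing for some strictly smaller $\xi$'' is not valid in general, and in any case the lemma is stated for an arbitrary $\xi\in(0,1)$. The fix is simply to observe that this stringency is unnecessary: since $L(y_n)\asymp n$, the error term is $\asymp n^{(3-1/\xi)/2}$, and for \emph{every} $\xi\in(0,1)$ one has $(3-1/\xi)/2<1$, so the error is $o(n)$ and is absorbed by the leading term $-\frac{\gamma}{1-\delta}\,n\log m$. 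This is exactly what the paper does: it writes the bound as $\exp\bigl\{-n\log m/(1-\delta/2)+\const\, n^{(3-1/\xi)/2}\bigr\}$ and concludes directly, with no restriction on $\xi$ beyond $\xi\in(0,1)$.
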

\begin{proof}
	Take $s = \gamma L(y_n)/y_n$ for some $\gamma \in (0,1)$ and consider the following upper bound
	\begin{equation*}
		\PP[S_n>x_n, \: N_n \leq y_n]  \leq e^{-sx_n} \EE\left[e^{sX}\ind_{\{ X \leq y_n\}}\right]^n.
	\end{equation*}
	An appeal to Lemma~\ref{lem:3:trunk} yields that for some universal constant $\const $,
	\begin{equation}\label{eq:3:cake}
 		\EE\left[e^{sX}\ind_{\{ X \leq y_n\}}\right] \leq 1 + \const n^{(1-1/\xi)/2}.
	\end{equation}
	Using the inequality $1-x \leq e^{-x}$ valid for all $x$, we can conclude that
	\begin{equation}\label{eq:3:thesame}
		\PP[S_n>x_n, \: N_n \leq y_n]  \leq \exp \left\{ -\gamma L(y_n) x_n/y_n + C n^{(3-1/\xi)/2} \right\}.
	\end{equation}
	Note that
	\begin{equation*}
		\lim_{n \to \infty} \gamma (L(y_n) x_n)/(y_nL(b_n)) = \gamma/(1-\delta)
	\end{equation*}
	so taking $\gamma$ sufficiently close to $1$ we can get for sufficiently large $n$,
	\begin{equation*}
		\PP[S_n>x_n, \: N_n \leq y_n]  \leq \exp \left\{ - n \log m /(1-\delta/2) + C n^{(3-1/\xi)/2} \right\}
	\end{equation*}
	which yields our claim.
\end{proof}

Let
\begin{equation*}
	z_n =  \frac{Tb_n\log n}{L(b_n)}
\end{equation*}
for some (sufficiently large) constant $T>0$ that will depend on $\delta \in (0,1)$. Note that under Assumption~\ref{as:step1}, $z_n =o(b_n)$ and
$a_n = o(z_n)$.

\begin{lem}\label{lem:3:small2}
	Let the Assumption~\ref{as:step1} be in force.
	Then there exists sufficiently small $\delta \in (0,1)$ and sufficiently large $T>0$ such that for any value of $K \in \RR$,
	\begin{equation*}
		\PP[S_{n-1}+X>x_n, \: N_{n-1} \leq y_n, \: X\in (y_n,x_n-z_n]] = o\left(nm^{-n}\right).
	\end{equation*}
\end{lem}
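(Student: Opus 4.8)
The plan is to bound the probability by conditioning on which summand of $S_{n-1}+X$ (here, the last one, $X$) is the large one, then applying an exponential Chebyshev bound together with the truncated-moment estimates of Lemmas~\ref{lem:3:trunk} and~\ref{lem:3:tree}. Concretely, on the event under consideration the first $n-1$ increments are all at most $y_n$ and the last increment $X$ lies in $(y_n, x_n - z_n]$. So I would first write
\begin{equation*}
	\PP[S_{n-1}+X>x_n, \: N_{n-1}\leq y_n, \: X \in (y_n, x_n-z_n]]
	\leq e^{-s x_n}\,\EE\!\left[e^{sX}\ind_{\{X\leq y_n\}}\right]^{n-1}\EE\!\left[e^{sX}\ind_{\{X\in(y_n,x_n-z_n]\}}\right],
\end{equation*}
with the choice $s = \gamma L(y_n)/y_n$ for $\gamma \in (0,1)$ close to $1$, exactly as in the proof of Lemma~\ref{lem:3:small}.

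Next I would insert the two moment bounds. For the product over the $n-1$ truncated-at-$y_n$ factors, Lemma~\ref{lem:3:trunk} gives $\EE[e^{sX}\ind_{\{X\leq y_n\}}] \leq 1 + \const\, n^{(1-1/\xi)/2}$, so using $1+x\le e^x$ this contributes $\exp\{\const\, n^{(1+?)/2}\cdots\}$ — a subpolynomial-in-the-exponent correction term of the same harmless type that already appeared in~\eqref{eq:3:thesame}. For the single factor involving $X\in(y_n,x_n-z_n]$, I would apply Lemma~\ref{lem:3:tree} with $y=y_n$, $z=z_n$ and $x=x_n$ (note $y_n=(1-\delta)b_n>x_n/2$ and $z_n=o(b_n)<x_n/2$ for large $n$, so the hypotheses of Lemma~\ref{lem:3:tree} are met), obtaining a bound of the form
\begin{equation*}
	\const\, e^{-L(y_n)} e^{(\gamma L(y_n)/y_n - L'(y_n)/3)(x_n-z_n)+y_nL'(y_n)/3} + e^{(\gamma-1)L(y_n)}.
\end{equation*}

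The remaining work is bookkeeping on the exponents. Collecting everything, the leading exponent is $-s x_n + (s - L'(y_n)/3)(x_n-z_n) + \text{(lower order)} = -s z_n - (L'(y_n)/3)(x_n-z_n) + O(y_n L'(y_n))$ plus the subpolynomial correction. Here the decisive term is $-(L'(y_n)/3) z_n$: since $z_n = T b_n (\log n)/L(b_n)$ and $L'$ is slowly varying with $L'(y_n)\sim L'(b_n)=1/a_n$, and since $L(b_n)\sim b_n L'(b_n)\cdot(\text{slowly varying})$ by regular variation — more precisely $L(b_n)/b_n = (1/a_n)\cdot(L(b_n)/(b_nL'(b_n)))$ and $b_n L'(b_n)/L(b_n)\to 0$ only up to the $\sqrt{\log\log}$ factor of Assumption~\ref{as:step1}, so in any case $L'(y_n) z_n \gtrsim c\,T\log n$ for large $n$ — this term behaves like $-c\,T\log n$ with a fixed $c>0$. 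Choosing $T$ large enough (depending on $\delta$ through the comparison constants) makes $e^{-(L'(y_n)/3)z_n} = o(n^{-C})$ for any prescribed $C$, which absorbs the factor $n$ in $o(nm^{-n})$; meanwhile the term $e^{-L(y_n)}=e^{-L((1-\delta)b_n)}$ together with $-sz_n$ and the slowly-varying comparison of $L((1-\delta)b_n)$ to $L(b_n)=n\log m$ supplies, after taking $\gamma\uparrow 1$ and $\delta\downarrow 0$, a factor $m^{-n(1+o(1))}$; and the second, ``$e^{(\gamma-1)L(y_n)}$'' piece from Lemma~\ref{lem:3:tree}, once multiplied by $e^{-sx_n}$ and the truncated product, is even smaller. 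The main obstacle is precisely this exponent accounting: making sure that the gain $-sz_n$ and the factor $e^{-L(y_n)}$ genuinely beat $e^{sx_n}$ with room to spare (one needs the $z_n$-term to dominate the difference $s x_n - L(y_n)$, which is where the logarithmic size of $z_n$ relative to $b_n$ and the slow variation of $L$ enter), and verifying that the subpolynomial correction $n^{(3-1/\xi)/2}$-type term from Lemma~\ref{lem:3:trunk} does not spoil this — it does not, since it is $o(n)$ in the exponent while the gain is $\Theta(n)$ from the $m^{-n}$ part and the extra $\log n$ from $z_n$ handles the prefactor $n$. I would end by noting that $\gamma$, $\delta$, $T$ can be fixed in that order to close the estimate.
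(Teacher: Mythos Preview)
Your approach is the same as the paper's: the exponential Chebyshev inequality with $s=\gamma L(y_n)/y_n$, then Lemma~\ref{lem:3:trunk} for the $(n-1)$ truncated factors and Lemma~\ref{lem:3:tree} for the single large increment. The factorisation and the hypotheses check for Lemma~\ref{lem:3:tree} are all correct.

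The bookkeeping, however, contains a genuine error. After collecting terms you have the exponent
\[
-sz_n - L(y_n) - \tfrac{L'(y_n)}{3}(x_n-z_n) + \tfrac{y_nL'(y_n)}{3} + o(n),
\]
and you then declare that ``the decisive term is $-(L'(y_n)/3)z_n$'' and that it ``behaves like $-cT\log n$''. Neither statement is right. First, no term $-(L'(y_n)/3)z_n$ appears in the expansion; $z_n$ enters only through $-sz_n$ and through $+(L'(y_n)/3)z_n$ (with a plus sign) when you split $-(L'(y_n)/3)(x_n-z_n)$. Second, and more importantly, $L'(y_n)z_n$ is \emph{not} of order $T\log n$: since $z_n=Tb_n\log n/L(b_n)$ and $L'(y_n)\sim L'(b_n)$, one has
\[
L'(y_n)z_n \;\sim\; T\log n\cdot\frac{b_nL'(b_n)}{L(b_n)}\;=\;o(\log n),
\]
because Assumption~\ref{as:step1} forces $b_nL'(b_n)/L(b_n)\to 0$ (indeed this is exactly the content of the second limit in~\eqref{conditions-on-L}). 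Your sentence ``$b_nL'(b_n)/L(b_n)\to 0$ only up to the $\sqrt{\log\log}$ factor \dots\ so in any case $L'(y_n)z_n\gtrsim cT\log n$'' has the inequality backwards.

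The term that actually delivers the $-cT\log n$ in the exponent is $-sz_n$ itself:
\[
sz_n=\frac{\gamma L(y_n)}{y_n}\cdot\frac{Tb_n\log n}{L(b_n)}
=\frac{\gamma T}{1-\delta}\cdot\frac{L(y_n)}{L(b_n)}\cdot\log n
\sim \frac{\gamma T}{1-\delta}\,\log n,
\]
which, for $T$ large, absorbs the prefactor $n$. This is precisely how the paper closes the estimate: it isolates $-\gamma L(y_n)z_n/(2y_n)$ as the piece that beats $n$, and separately argues that the remaining $b_nL'(b_n)$-order terms combine with $-L(y_n)$ to produce the $m^{-n}$ factor for $\delta$ small. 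So your outline is salvageable, but you must swap the roles you assigned to $-sz_n$ and the $L'(y_n)$-terms; as written, the ``decisive term'' claim is false and the argument does not close.
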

\begin{proof}
	Use a simple estimate
	\begin{multline*}
		\PP[S_{n-1}+X>x_n, \: N_{n-1} \leq y_n, \: X\in (y_n,x_n-z_n]] \leq  \\
		\exp\{ -\gamma L(y_n) x_n/y_n \} \EE\left[e^{ \gamma L(y_n) X/y_n}\ind_{\{ X \in (y_n,x_n-z_n]\}}\right]
		\EE\left[e^{sX}\ind_{\{ X \leq y_n\}}\right]^n.
	\end{multline*}
	The last factor was already bounded in the proof of Lemma~\ref{lem:3:small}. To bound the second one we appeal to
	Lemma~\ref{lem:3:tree}.
	Using the fact that
	\begin{equation*}
		L(y_n) = L(b_n) - \delta b_n L'(\theta_n).
	\end{equation*}
	for some $\theta_n \in ((1-\delta)b_n, x_n)$ combined with regular variation of $L'$
	we arrive at
	\begin{multline*}
		\PP[S_{n-1}+X>x_n, \: N_{n-1} \leq y_n, \: X\in (y_n,x_n-z_n]] \leq  \\
		\const\exp\left\{ - L(b_n) +  \left( \delta/(1-\delta) -(1+\delta)/3 \right) b_nL'(b_n) - \gamma L(y_n)z_n/(2y_n)  \right\}.
	\end{multline*}
	If $\delta$ is small enough the last bound is smaller than
	\begin{equation*}
		\const\exp\left\{ - L(b_n) - \gamma L(y_n)z_n/(2y_n)  \right\}
	\end{equation*}
	which is $o(nm^{-n})$ provided that $T$ is sufficiently large.
	The second factor can be treated similarly as $S_n$ in the proof of Lemma~\ref{lem:3:small}.
	This implies the claim.
\end{proof}

\section{Proofs}\label{sec:4:proofs}
	In this section we will present the proofs of our main results. Both arguments differ slightly in technical aspects, but they rely on the same idea.
	Namely, we will approximate $\Pp_n$ via the point process
	\begin{equation*}
		 \Tp_n = \sum_{|x|=n} \delta_{T(x)}, \quad T(x) = \max_{y \in (\emptyset, x]} X_y.
	\end{equation*}
	Next we will show that $\Tp_n$ has the desired limit.

\subsection{The suplogarithmic case}
	We will now present the proof of Theorem~\ref{thm:2:1}. Throughout the subsection Assumption~\ref{as:galton} and Assumption~\ref{as:step1}
	are in force.
	Recall that
	\begin{equation*}
		b_n = \inf \{ t \geq 0 \: :\: \PP[X>t]\leq m^{-n} \},  \qquad z_n =  \frac{Tb_n\log n}{L(b_n)}
	\end{equation*}
	and
	\begin{equation*}
		y_n = (1-\delta)b_n, \quad a_n =1/L'(b_n), \qquad x_n = x_n(K) = b_n +Ka_n.
	\end{equation*}	
	We will first consider the following subsets of the set of particles in the $n-$th generation $\mathbb{T}_n$,
	\begin{align*}
		\mathcal{A}_n^{(1)} & = \{ x \in \mathbb{T}_n \ : \: T(x) \leq y_n \}, \\
		\mathcal{A}_n^{(2)} & = \{ x \in \mathbb{T}_n \ : \:  \exists v_1, v_2 \leq x, \: v_1 \neq v_2, \:  \min \{ X_{v_1}, X_{v_2} \}\geq y_n \}, \\
		\mathcal{A}_n^{(3)} & = \{ x \in \mathbb{T}_n\setminus (\mathcal{A}_n^{(1)} \cup \mathcal{A}_n^{(2)} ) \ : \: T(x) \leq b_n -z_n\}.
	\end{align*}
	We aim to show that the particles from $\mathcal{A}_n= \mathcal{A}_n^{(1)} \cup \mathcal{A}_n^{(2)} \cup \mathcal{A}_n^{(3)}$ do not contribute to the limit.
	\begin{lem}\label{lem:4:ao}
		Let
		\begin{equation*}
			 \Pp_n^{\mathcal{A}_n} = \sum_{x \in \mathcal{A}_n} \delta_{V(x)}.
		\end{equation*}
		Then under the Assumptions~\ref{as:galton} and \ref{as:step1},
		\begin{equation*}
			 \scale_{a^{-1}_n}\shift_{-b_n}\Pp_n^{\mathcal{A}_n}  \Rightarrow o,
		\end{equation*}
		weakly in $\M(\bR)$, where $o$ denotes the null measure.
	\end{lem}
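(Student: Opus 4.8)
The plan is to reduce the claim to a first-moment estimate and then quote the random walk bounds of Section~\ref{sec:3:RWestimates}. Since $\scale_{a_n^{-1}}\shift_{-b_n}\Pp_n^{\mathcal{A}_n}\Rightarrow o$ in $\M(\bR)$ is equivalent to $(\scale_{a_n^{-1}}\shift_{-b_n}\Pp_n^{\mathcal{A}_n})(f)\to 0$ in probability for every $f\in C_c^+(\bR)$, and since any such $f$ is supported in a set of the form $[c,\infty]$, one has, for any fixed $K<c$, the pointwise bound
\[
(\scale_{a_n^{-1}}\shift_{-b_n}\Pp_n^{\mathcal{A}_n})(f)\ \le\ \|f\|_\infty\,\#\{x\in\mathcal{A}_n\::\:V(x)>x_n(K)\},\qquad x_n(K):=b_n+Ka_n .
\]
By Markov's inequality it therefore suffices to show that $\EE[\#\{x\in\mathcal{A}_n\::\:V(x)>x_n(K)\}]\to 0$ for every fixed $K\in\RR$. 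I would then split along $\mathcal{A}_n=\mathcal{A}_n^{(1)}\cup\mathcal{A}_n^{(2)}\cup\mathcal{A}_n^{(3)}$ and bound each piece with the many-to-one identity $\EE[\sum_{|x|=n}g(X_{x_1},\dots,X_{x_n})]=m^n\,\EE[g(X_1,\dots,X_n)]$, keeping in mind that $L(b_n)=n\log m+O(1)$; below I abbreviate $x_n=x_n(K)$.

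The $\mathcal{A}_n^{(1)}$ contribution is immediate: many-to-one turns it into $m^n\,\PP[S_n>x_n,\,N_n\le y_n]$, which is $o(1)$ by Lemma~\ref{lem:3:small}. For $\mathcal{A}_n^{(2)}$ I would drop the constraint $V(x)>x_n$ altogether and merely count particles whose spine carries two displacements exceeding $y_n$; a union bound over the pair of such ancestors gives
\[
\EE[\#\mathcal{A}_n^{(2)}]\ \le\ m^n\binom n2\,\PP[X\ge y_n]^2\ \le\ \const\,n^2\,m^{-n}\,e^{2(L(b_n)-L(y_n))}.
\]
The crucial input here is $L(b_n)-L(y_n)=\int_{y_n}^{b_n}L'(t)\,\ud t\le \const\,\delta\,b_nL'(b_n)=o(n)$, the last equality being precisely what the second condition in~\eqref{conditions-on-L} delivers (it forces $b_nL'(b_n)=o(L(b_n))$). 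Hence $\EE[\#\mathcal{A}_n^{(2)}]\le\const\,n^2e^{-n\log m+o(n)}\to 0$.

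The set $\mathcal{A}_n^{(3)}$ is the delicate one. A particle $x\in\mathcal{A}_n^{(3)}$ carries, by construction, exactly one ancestor $v\le x$ with $X_v>y_n$, and moreover $X_v\in(y_n,b_n-z_n]$. Decomposing over which of the $n$ steps of the spine is that big one and using exchangeability, many-to-one yields
\[
\EE\bigl[\#\{x\in\mathcal{A}_n^{(3)}\::\:V(x)>x_n\}\bigr]\ \le\ n\,m^n\,\PP\bigl[S_{n-1}+X>x_n,\ N_{n-1}\le y_n,\ X\in(y_n,b_n-z_n]\bigr],
\]
with $X$ an independent copy of the step. Since $a_n=o(z_n)$, one may replace $b_n-z_n$ by $x_n-z_n'$ for a suitable sequence $z_n'\sim z_n$, so this probability is exactly of the type estimated in Lemma~\ref{lem:3:small2}. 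The subtle point --- and the main obstacle --- is that the combinatorial prefactor $n$ must be absorbed, so the crude form $o(nm^{-n})$ recorded in that lemma is not enough; instead one reads off from its proof that the estimate is in fact $\const\,m^{-n}n^{-\gamma T/(2(1-\delta))+o(1)}$, which is $o(m^{-n}/n)$ as soon as $T$ is taken large enough (with $\delta$ correspondingly small, as the lemma requires). With this sharpening, $\EE[\#\{x\in\mathcal{A}_n^{(3)}:V(x)>x_n\}]\le n\,m^n\cdot o(m^{-n}/n)=o(1)$, and summing the three bounds completes the proof. Apart from this matching of the prefactor $n$ against the decay furnished by Lemma~\ref{lem:3:small2}, the argument is a routine combination of the many-to-one lemma with the tail asymptotics of $b_n$.
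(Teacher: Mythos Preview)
Your argument is correct and follows the same decomposition and the same random-walk lemmas as the paper's proof; in fact your treatment of $\mathcal{A}_n^{(2)}$ and $\mathcal{A}_n^{(3)}$ is more careful than the paper's terse sketch. In particular, you rightly point out that the bound $o(nm^{-n})$ recorded in Lemma~\ref{lem:3:small2} is formally too weak to absorb the combinatorial prefactor~$n$, and that one must read off the sharper $n^{-\gamma T/(2(1-\delta))}$ decay from its proof --- a point the paper glosses over.
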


	\begin{proof}
		It suffices to show that for any $K \in \RR$,
		 \begin{equation*}
		 	\lim_{n \to \infty}\Pp_n^{\mathcal{A}_n}[[x_n(K), +\infty]] =0
		 \end{equation*}
		 in probability. We will appeal to the decomposition $\Acal_n = \Acal_n^{(1)} \cup \Acal_n^{(2)} \cup \Acal_n^{(3)}$, and treat
		 $\mathcal{A}_n^{(j)}$'s one by one.
		For the first one write
		\begin{equation*}
			\PP[\exists x \in \mathcal{A}_n^{(1)} \: : \: V(x) > x_n] \leq m^n \PP[S_n > x_n, \: N_n\leq y_n]
		\end{equation*}
		Invoke Lemma~\ref{lem:3:small} to get
		\begin{equation*}
			\PP[S_n > x_n, \: N_n\leq y_n]  = o(m^{-n}).
		\end{equation*}
		To treat $\mathcal{A}_n^{(2)}$ just note that
		\begin{equation*}
			\PP[\mathcal{A}_n^{(2)} \neq \emptyset] \leq m^n e^{-2 L(\delta b_n)} = o(1).
		\end{equation*}
		Finally, for $\mathcal{A}_n^{(3)}$ we argue as for  $\mathcal{A}_n^{(2)}$ but instead of using Lemma~\ref{lem:3:small} we use 	
		Lemma~\ref{lem:3:small2}.
	\end{proof}

	\begin{lem}\label{lem:4:tea}
		Suppose that the Assumptions~\ref{as:galton} and \ref{as:step1} are in force.
		Let
		\begin{equation*}
			M_n^{(A)} = \max \left\{ \left|V(x)-T(x) \right| \: : \:
			x \in \mathbb{T}_n\setminus \mathcal{A}_n\right\}.
		\end{equation*}
		Then, for any $\delta\in (0,1)$ and $T>0$, we have
		\begin{equation*}
			\lim_{n \to \infty}M_n^{(A)}/a_n =0
		\end{equation*}
		in probability.
	\end{lem}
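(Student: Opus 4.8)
The plan is to control the discrepancy $V(x)-T(x)=\sum_{y\in(\emptyset,x]}X_y-\max_{y\in(\emptyset,x]}X_y$ uniformly over $x\in\mathbb T_n\setminus\mathcal A_n$ by splitting, along each path, the contribution of the negative steps from that of the positive steps below $T(x)$. For $x\notin\mathcal A_n$ we know two things: (i) $T(x)>y_n=(1-\delta)b_n$, and in fact $T(x)>b_n-z_n$ since $x\notin\mathcal A_n^{(3)}$; and (ii) exactly one ancestor $v\le x$ realises the maximum, with all other steps $X_y<y_n$ for $y\le x$, $y\neq v$ (this is the point of excluding $\mathcal A_n^{(2)}$). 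Hence
\[
|V(x)-T(x)|\le \sum_{y\in(\emptyset,x],\,y\neq v}|X_y|\ \le\ \sum_{y\le x}X_y^+\ind_{\{X_y\le y_n\}}+\sum_{y\le x}X_y^-,
\]
so it suffices to show that each of these two sums, maximised over the at most $Z_n$ paths, is $o(a_n)$ in probability. Since $Z_n\le m^n W_n$ with $W_n\to W<\infty$, and more crudely $\PP[\,\exists x:\cdots\,]\le m^n\PP[S_n\text{-type event}]$ by a union bound over the $\le m^n$ expected vertices, the problem reduces to a random-walk estimate: bounding the probability that a sum of $n$ iid copies of $X^+\ind_{\{X\le y_n\}}$ (resp.\ $X^-$) exceeds $\eta a_n$, for arbitrary $\eta>0$.

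For the positive part, apply an exponential Markov bound with the parameter $s=\gamma L(y_n)/y_n$ already used in Section~\ref{sec:3:RWestimates}:
\[
m^n\PP\Big[\sum_{k\le n}X_k^+\ind_{\{X_k\le y_n\}}>\eta a_n\Big]\le m^n e^{-s\eta a_n}\,\EE\big[e^{sX}\ind_{\{X\le y_n\}}\big]^n.
\]
By Lemma~\ref{lem:3:trunk} the expectation is $1+o(n^{(1-1/\xi)/2})$, so the $n$th power is $\exp\{o(n^{(3-1/\xi)/2})\}$; meanwhile $s\,a_n=\gamma L(y_n)/(y_nL'(b_n))\sim \gamma/(1-\delta)\cdot L(b_n)/(b_nL'(b_n))$, which under the second condition in \eqref{conditions-on-L} grows faster than any power of $n$ — in particular faster than $n\log m+o(n^{(3-1/\xi)/2})$. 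Hence the bound is $o(1)$ for every fixed $\eta>0$. For the negative part, the left-tail hypothesis $\PP[X<-t]\le t^{-\varepsilon}$ gives $\EE[(X^-)^{\varepsilon/2}]<\infty$, so $\sum_{k\le n}X_k^-$ has at most polynomial growth; more directly, $\PP[X^->\eta a_n/n]\le (n/(\eta a_n))^{\varepsilon}$, whence $m^n\PP[\,\exists x: \sum_{y\le x}X_y^->\eta a_n\,]\le m^n\cdot n\,(n/(\eta a_n))^{\varepsilon}\to0$ since $a_n$ is superpolynomial. (One may instead peel off the single largest negative step and control the remaining sum by its mean; either way the superpolynomial growth of $a_n$ does the work.)

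Combining, $\PP[M_n^{(A)}>\eta a_n]\to0$ for every $\eta>0$, which is the claim. The main obstacle is bookkeeping rather than conceptual: one must make precise that on $\mathbb T_n\setminus\mathcal A_n$ there is a \emph{unique} maximiser along each path so that the leftover sum genuinely consists of steps bounded by $y_n$ (otherwise a second large step could spoil the bound), and one must invoke the precise asymptotics of $b_n$, $a_n=1/L'(b_n)$ encoded in Assumption~\ref{as:step1} — specifically that $s\,a_n=\Theta\big(L(b_n)/(b_nL'(b_n))\big)$ dominates $n$ — to close the exponential estimate. Both are available from the results already established above.
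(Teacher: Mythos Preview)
There is a genuine gap: by maximising over \emph{all} paths $x\in\mathbb T_n$ instead of only over $x\in\mathbb T_n\setminus\mathcal A_n$, you discard the factor $\approx m^{-n}$ coming from the mandatory big jump, and the union bound over $\sim m^n$ paths then cannot be closed. Concretely, for the positive part your claim that $s\,a_n\sim \tfrac{\gamma}{1-\delta}\,L(b_n)/(b_nL'(b_n))$ ``grows faster than any power of $n$'' is false: the second condition in~\eqref{conditions-on-L} only yields $L(b_n)/(b_nL'(b_n))\gg\sqrt{\log\log L(b_n)}\sim\sqrt{\log n}$, and in the lognormal example one computes $L(b_n)/(b_nL'(b_n))\asymp\sqrt{n}$. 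Hence $m^n e^{-\eta s a_n}\exp\{O(n^{(3-1/\xi)/2})\}\not\to 0$. For the negative part, ``$a_n$ superpolynomial'' is likewise insufficient against an exponential: $a_n$ grows only subexponentially (cf.\ the remark following~\eqref{eq:2:seq}), so $a_n^{\varepsilon}=e^{o(n)}$ and $m^n\cdot n\,(n/(\eta a_n))^{\varepsilon}\to\infty$, not $0$.

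The fix, which is what the paper does, is to keep the event $\{x\notin\mathcal A_n\}$ inside the first-moment bound. Every such $x$ carries a unique step exceeding $b_n-z_n$, so one may factor and obtain
\[
\PP\big[M_n^{(A)}>\varepsilon a_n\big]\ \le\ n\,m^n\,\PP[X>b_n-z_n]\,\PP\big[|S_{n-1}|>\varepsilon a_n\big],
\]
and $m^n\PP[X>b_n-z_n]=n^{o(1)}$ because $z_nL'(\theta_n)=o(\log n)$. Now only polynomial factors remain, and your own pigeonhole estimate $\PP[|S_{n-1}|>\varepsilon a_n]\le n(\varepsilon a_n/n)^{-\varepsilon}$ is more than enough to conclude.
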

	\begin{proof}
		Take $\varepsilon>0$ and write
		\begin{equation*}
			\PP\left[M_n^{(A)} > \varepsilon a_n\right]
			 \leq n m^n \PP[X > b_n-z_n] \PP[|S_{n-1}| > \varepsilon a_n].
		\end{equation*}
		The logarithm of the second term can be expressed in the following fashion
		\begin{equation*}
			\log \PP[X>b_n-z_n] = L(b_n) -z_nL'(\theta_n)
		\end{equation*}
		for some $\theta_n \in (b_n-z_n, b_n)$.  Since $z_n L'(\theta_n) = o(\log n)$
		it is sufficient to consider a generous upper bound
		\begin{equation*}
 			\PP[|S_{n-1}| > \varepsilon a_n] \leq n \PP[|X_1| > \varepsilon a_n/n] \leq n (\varepsilon a_n/n)^{-\varepsilon}.
		\end{equation*}	
		Since $a_n$ grows faster than any polynomial this constitutes
		\begin{equation*}
			\PP\left[M_n^{(A)} > \varepsilon a_n\right] \leq  C/n
		\end{equation*}
		and secures the claim.
	\end{proof}

	Using the above lemma we can approximate $\Pp_n$ via $\Tp_n$.We will present the analysis of the latter followed by an approximation lemma.
	Consider the stopping line
	\begin{multline}\label{eq:4:line}
		\mathcal{T}_n =
		\Big\{ |v|\leq n \: : \: \exists x \in \mathbb{T}_n\setminus \mathcal{A}_n, \: x \geq v,\\ \: \max_{y<v}X_y \leq b_n-z_n, \: X_v> b_n -z_n \Big\}.
	\end{multline}
	For $v \in \mathbb{U}$ and $n \in \NN$ consider
	\begin{equation*}
		E_n(v) =
		\# \left\{ |x|=n+|v| \: : \:  x \geq v \right\}.
	\end{equation*}
	Then for any $f \in C_c^+((-\infty, \infty])$, for sufficiently large $n$, on the event $\{\mathcal{A}_n^{(2)}=\emptyset\}$,
	\begin{equation*}
		\sum_{|x|=n} f\left(\frac{T(x)-b_n}{a_n}\right) =\sum_{v \in \mathcal{T}_n} f\left(\frac{X_v -b_n}{a_n}\right) E_{n-|v|}(v).
	\end{equation*}
	Note that $E_{n - |v|}(v)$ denotes the number of children of the vertex $v$ at the $n-$th generation
	The sum on the right can be linked to yet another point process 
on $(-\infty, \infty]\times \NN$ given via
    	\begin{equation*}
      		\Npa_n = \sum_{x \in \mathcal{T}_n} \delta_{(X_x-b_n)/a_n}\otimes \delta_{n-|x|}.
    	\end{equation*}
	To note a limiting result for the latter define a measure on $\NN$ via
	\begin{equation*}
		\rho(\cdot)  = \sum_{j=0}^\infty m^{-j}\delta_j(\cdot)
	\end{equation*}
	and a Poisson point process $\Npa$ on $\RR \times \NN$ given via
	\begin{equation*}
		\Npa = \sum_{k}\delta_{\ell_k - \log(aW)}\otimes \delta_{\iota_k},
	\end{equation*}
	where $\{ (\ell_k, \iota_k)\}_{k}$ is a Poisson point process
	with intensity $e^{-x}\ud x \rho(\ud j)$. Then $\Npa$ has a Laplace functional of the form
	\begin{equation*}
		\EE \left[e^{ - \int f(x,j) \Npa(\ud x, \ud j) } \right] = \EE \left[\exp\left\{ - W \int \left(1-e^{-f(x,l)}\right) e^{-x}\ud x \rho(\ud l) \right\}\right].
	\end{equation*}

	\begin{prop}\label{prop:4:banana}
		Under the Assumption~\ref{as:galton} and Assumption~\ref{as:step1}, we have
		\begin{equation*}
				\EE \left[e^{ - \int f(x,j) \Npa_n(\ud x, \ud j) } \right] \to \EE \left[\exp\left\{ - W \int \left(1-e^{-f(x,l)}\right) e^{-x}\ud x \rho(\ud l) \right\}\right]
		\end{equation*}
		for any continuous, nonnegative function $f\colon (-\infty, \infty]\times \NN$ such that $f(x,i) =0$ for sufficiently large $|x|$.
		In particular, $\Npa_n$ converges weakly in
		$\M ((-\infty, \infty]\times \NN)$ to $\Npa$.
	\end{prop}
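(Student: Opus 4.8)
The plan is to compute the Laplace functional of $\Npa_n$ by conditioning on the genealogical tree $\T$ and exploiting the independence of the displacements from the branching structure. First I would fix a continuous $f\colon(-\infty,\infty]\times\NN\to[0,\infty)$ with $f(x,i)=0$ for $|x|$ large, say $f(x,i)=0$ whenever $x\le -R$. The key structural observation is that, conditionally on $\T$, the contributions $\delta_{(X_v-b_n)/a_n}\otimes\delta_{n-|v|}$ indexed by $v\in\mathcal{T}_n$ are \emph{not} independent because the event $\{v\in\mathcal{T}_n\}$ depends on the displacements $X_y$ for $y<v$; however, the stopping-line structure of $\mathcal{T}_n$ — each $v\in\mathcal{T}_n$ is the first vertex on the path from the root with $X_v>b_n-z_n$ — together with the one-big-jump heuristic (at most one such vertex per line, guaranteed on $\{\mathcal{A}_n^{(2)}=\emptyset\}$) allows one to disentangle them. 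So the first reduction is to work on $\{\mathcal{A}_n^{(2)}=\emptyset\}$, whose complement is negligible by Lemma~\ref{lem:4:ao}, and to replace the event $\{\max_{y<v}X_y\le b_n-z_n\}$ by $1$ at the cost of an $o(1)$ error, again controlled as in the proof of Lemma~\ref{lem:3:small2}.

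The second step is the explicit computation. Conditioning on $\T$ and on the branching below the stopping line, one obtains a product over the vertices $v$ on the stopping line of factors of the form
\begin{equation*}
	\EE\!\left[\exp\!\left\{-f\!\left(\tfrac{X_v-b_n}{a_n},\,n-|v|\right)E_{n-|v|}(v)\right\}\ind_{\{X_v>b_n-z_n\}}\right] + \PP[X_v\le b_n-z_n],
\end{equation*}
and then one takes the expectation over $\T$. Using $m^n\PP[X>b_n+a_nx]\to e^{-x}$ (the von Mises property from Assumption~\ref{as:step1}, recalled just before Theorem~\ref{thm:2:1}) together with the change of variables $x=(t-b_n)/a_n$, the total contribution of the line at level $k=n-j$ turns into an approximate Riemann sum converging to $Z_j\cdot$ (an integral of $1-e^{-f(x,j)}$ against $e^{-x}\,\ud x$), where $Z_j$ counts the ancestors at level $n-j$ — and here the martingale convergence $m^{-(n-j)}Z_{n-j}\to W$ from~\eqref{eq:2:GaltonLimit} supplies the factor $W$ and the extra $m^{-j}$ that builds up the measure $\rho(\ud j)=\sum_j m^{-j}\delta_j$. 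Simultaneously one must identify the conditional law of $E_{n-|v|}(v)$ given the stopping line: conditioned on $n-|v|=j$, $E_{n-|v|}(v)$ is distributed as $Z_j$, and these are asymptotically independent across $v\in\mathcal{T}_n$ because distinct stopping-line vertices have disjoint descendant subtrees. Assembling these pieces and using dominated convergence (justified since $f$ is bounded with support bounded on the left, so $\int(1-e^{-f(x,j)})e^{-x}\,\ud x<\infty$ and the number of relevant lines is tight) yields
\begin{equation*}
	\EE\!\left[e^{-\int f\,\ud\Npa_n}\right]\longrightarrow
	\EE\!\left[\exp\!\left\{-W\int\bigl(1-e^{-f(x,l)}\bigr)e^{-x}\,\ud x\,\rho(\ud l)\right\}\right],
\end{equation*}
which is the claimed limit; the final assertion about weak convergence in $\M((-\infty,\infty]\times\NN)$ then follows from the Laplace-functional criterion recorded in Section~\ref{sec:2:main}.

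The main obstacle I expect is the rigorous decoupling in the second step: making precise that, after conditioning on the tree, the stopping-line vertices contribute an \emph{asymptotically} independent collection of Poissonized jumps with the right intensity, and that the block variables $E_{n-|v|}(v)$ can be treated as independent copies of $Z_{n-|v|}$. The clean way to do this is the standard stopping-line (optional-stopping) machinery: the family $\{E_{n-|v|}(v)\}_{v\in\mathcal{T}_n}$ is conditionally independent given $\mathcal{T}_n$ because the subtrees rooted at the stopping-line vertices are independent Galton–Watson trees, and a first-moment bound (of the type already used in Lemma~\ref{lem:4:ao}) shows that multiple big jumps along a single line are negligible. A secondary technical point is the uniform control of the error in $m^n\PP[X>b_n+a_nx]\approx e^{-x}$ over the range of $x$ that matters (roughly $x\in[-R,\,z_n/a_n]$), which is exactly what the limiting relations in~\eqref{conditions-on-L} are designed to furnish.
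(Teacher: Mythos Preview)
Your plan targets the wrong object. The point process $\Npa_n$ has atoms $\delta_{(X_v-b_n)/a_n}\otimes\delta_{n-|v|}$, so that $\int f\,\ud\Npa_n=\sum_{v\in\mathcal{T}_n} f\bigl((X_v-b_n)/a_n,\,n-|v|\bigr)$, with \emph{no} factor $E_{n-|v|}(v)$. Your displayed product factor
\[
\EE\!\left[\exp\!\left\{-f\!\left(\tfrac{X_v-b_n}{a_n},\,n-|v|\right)E_{n-|v|}(v)\right\}\ind_{\{X_v>b_n-z_n\}}\right] + \PP[X_v\le b_n-z_n]
\]
and the ensuing discussion of the conditional law of $E_{n-|v|}(v)$ belong to Proposition~\ref{prop:4:npb} (the process $\Npb_n$) and to Proposition~\ref{prop:4:tn} (the process $\Tp_n$), not to Proposition~\ref{prop:4:banana}. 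In particular, your ``main obstacle'' --- decoupling the blocks $E_{n-|v|}(v)$ --- is simply absent here.

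The paper's argument is much shorter and avoids the stopping-line bookkeeping altogether at this stage. Because $f$ has compact support in the first coordinate and $a_n=o(z_n)$, for large $n$ one may replace $\sum_{v\in\mathcal{T}_n}$ by the unrestricted sum $\sum_{|v|\le n}$: only vertices with $X_v>b_n-Ra_n$ can contribute, and (modulo the negligible event treated in Lemma~\ref{lem:4:ao}) these are exactly the stopping-line vertices. After that replacement, conditioning on the Galton--Watson tree yields the clean product
\[
\EE\left[\prod_{k=1}^n \EE\!\left[e^{-f((X-b_n)/a_n,\,n-k)}\right]^{Z_k}\right],
\]
since the $X_v$'s are i.i.d.\ and independent of $\T$. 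One then passes to the limit level by level using $m^n\PP[(X-b_n)/a_n\in\ud x]\to e^{-x}\ud x$ and $m^{-n}Z_{n-l}\to m^{-l}W$, and truncates the product using the compact support of $f$. No decoupling of descendant subtrees is needed: the variables $E_{n-|v|}(v)$ enter only later, in Proposition~\ref{prop:4:npb}, where Proposition~\ref{prop:4:banana} is invoked with the modified test function $f^*(x,j)=-\log\EE[e^{-f(x,Z_j)}]$.
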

	
	\begin{proof}
		For a function $f$ as in the statement we have
		\begin{equation*}
			\EE \left[e^{ - \int f(x,j) \Npa_n(\ud x, \ud j) } \right]
			= \EE \left[ \exp\left\{ - \sum_{|v|\leq n} f\left(\frac{X_v -b_n}{a_n}, n-|v|\right)  \right\} \right].
		\end{equation*}
		Conditioning on the Galton-Watson process we get
		\begin{equation*}
			\EE\left[ \prod_{k=1}^n \EE\left[\exp\left\{ -  f\left(\frac{X -b_n}{a_n}, n-k\right)  \right\}\right]^{Z_k} \right].
		\end{equation*}
		Using the facts that
		\begin{equation*}
			m^n \PP[(X-b_n)/a_n \in \ud x] \to^\nu e^{-x}\ud x
		\end{equation*}
		and that for any fixed $l$
		\begin{equation*}
			m^{-n}Z_{n-l} \to m^lW,
		\end{equation*}
		we infer that for any fixed $l$,
		\begin{equation*}
			\EE\left[\exp\left\{ -  f\left(\frac{X -b_n}{a_n}, l\right)  \right\}\right]^{Z_{n-l}} \to \exp\left\{ - m^{-l}W \int \left(1-e^{-f(x,l)}\right) e^{-x}\ud x \right\},
		\end{equation*}
since $f(\cdot, l) \in C_c^+((-\infty, \infty])$.
		Combine this with the proviso concerning the support of $f$ and use a standard approximation of infinite product by finite ones, to get
		\begin{multline*}
			\EE \left[e^{ - \int f(x,j) \Npa_n(\ud x, \ud j) } \right] = \EE\left[ \prod_{k=1}^n \EE\left[\exp\left\{ -  f\left(\frac{X -b_n}{a_n}, n-k\right)  \right\}\right]^{Z_k} \right]\\ \to \exp\left\{ - \sum_{l=0}^\infty m^{-l}W \int \left(1-e^{-f(x,l)}\right) e^{-x}\ud x \right\}
		\end{multline*}
as $n$ tends to infinity.
	\end{proof}

	We will use the last proposition to show 
a limit theorem for
	\begin{equation*}
      		\Npb_n = \sum_{x \in \mathcal{T}_n} \delta_{(X_x-b_n)/a_n}\otimes \delta_{E_{n-|x|}(x)}.
	\end{equation*}
	The proof will rely on the observation that the random variables $\{E_{n-|x|}(x) \}_{x \in \mathcal{T}_n}$
	are independent and are independent of $\{X_{x}\}_{x \in \mathcal{T}_n}$ given $\mathcal{T}_n$. Furthermore,
	\begin{equation*}
		\PP[ E_{n-|x|}(x) \in \cdot | x \in \mathcal{T}_n] = \PP[Z_{n-k} \in \cdot | Z_{n-k}>0] \quad \mbox{for $|x|=k$}.
	\end{equation*}
	If we thus consider iid copies $\{Z^{(v)}\}_{v \in \mathbb{U}}$ of the underlying Galton-Watson process $Z = \{ Z_{k}\}_{k \in \NN}$ we
	can conclude that
	\begin{equation*}
      		\Npb_n = \sum_{x \in \mathcal{T}_n} \delta_{(X_x-b_n)/a_n}\otimes \delta_{E_{n-|x|}(x)} \stackrel{d}{=}\sum_{x \in \mathcal{T}_n} \delta_{(X_x-b_n)/a_n}\otimes \delta_{Z^{(x)}_{n-|x|}}.
	\end{equation*}
	We will use this representation in the proof of the next proposition. Define
	\begin{equation*}
		\rho^*(\ud k) = \sum_{j=0}^\infty m^{-j} \PP[ Z_j \in \ud k]
	\end{equation*}
	and a Poisson point process $\Npb$ on $\RR \times \NN$ given via
	\begin{equation*}
		\Npb = \sum_{k}\delta_{\ell_k - \log(aW)}\otimes \delta_{\iota_k^*},
	\end{equation*}
	where $\{ (\ell_k, \iota_k^*)\}_{k}$ is a Poisson point process
	with intensity $e^{-x}\ud x \rho^*(\ud j)$. Then $\Npb$ has a Laplace functional of the form
	\begin{equation*}
		\EE \left[e^{ - \int f(x,j) \Npb(\ud x, \ud j) } \right] = \EE \left[\exp\left\{ - W \int \left(1-e^{-f(x,l)}\right) e^{-x}\ud x \rho^*(\ud l) \right\}\right].
	\end{equation*}

	\begin{prop}\label{prop:4:npb}
		Under the Assumption~\ref{as:galton} and Assumption~\ref{as:step1}, we have
		\begin{equation*}
				\EE \left[e^{ - \int f(x,j) \Npb_n(\ud x, \ud j) } \right] \to \EE \left[e^{ - \int f(x,j) \Npb(\ud x, \ud j) } \right]
		\end{equation*}
		for any continuous, nonnegative function $f\colon (-\infty, \infty]\times \NN$ such that $f(x,i) =0$ for sufficiently large $|x|$.
		In particular, $\Npb_n$ converges weakly in
		$\M ((-\infty, \infty]\times \NN)$ to $\Npb$.
	\end{prop}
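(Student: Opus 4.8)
The plan is to integrate out the descendant counts $E_{n-|x|}(x)$ and thereby reduce the statement to Proposition~\ref{prop:4:banana}. Fix a test function $f$ as in the statement and recall the distributional identity recorded above,
\[
	\Npb_n \stackrel{d}{=} \sum_{x\in\mathcal{T}_n}\delta_{(X_x-b_n)/a_n}\otimes\delta_{Z^{(x)}_{n-|x|}},
\]
where $\{Z^{(x)}\}_{x\in\U}$ are iid copies of the Galton--Watson process, independent of $(\mathcal{T}_n,\{X_x\}_{x\in\mathcal{T}_n})$. Conditioning on $\sigma(\mathcal{T}_n,\{X_x\}_{x\in\mathcal{T}_n})$ and using that, given this $\sigma$-algebra, the marks $\{Z^{(x)}_{n-|x|}\}_{x\in\mathcal{T}_n}$ are independent of each other, one obtains
\[
	\EE\left[e^{-\int f(x,j)\,\Npb_n(\ud x,\ud j)}\right]
	= \EE\left[\prod_{x\in\mathcal{T}_n}\EE\left[e^{-f((X_x-b_n)/a_n,\,Z_{n-|x|})}\right]\right]
	= \EE\left[e^{-\int \hat f(x,j)\,\Npa_n(\ud x,\ud j)}\right],
\]
where $\hat f(y,j):=-\log\EE\big[e^{-f(y,Z_j)}\big]$ and the inner expectation runs over an independent copy of the $j$th generation size $Z_j$.

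Next I would check that $\hat f$ is admissible for Proposition~\ref{prop:4:banana}. It is nonnegative because $\EE[e^{-f(y,Z_j)}]\le 1$; for each fixed $j$ the map $y\mapsto\sum_{k\ge 0}\PP[Z_j=k]e^{-f(y,k)}$ is a uniformly convergent series of continuous functions, hence continuous, and it is bounded below by $\PP[Z_j=k_0]e^{-\sup_y f(y,k_0)}>0$ for any $k_0$ with $\PP[Z_j=k_0]>0$, so $\hat f(\cdot,j)$ is continuous; and if $f(y,k)=0$ for all $k$ whenever $|y|>R$, then $\hat f(y,j)=-\log 1=0$ for $|y|>R$. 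Proposition~\ref{prop:4:banana} then gives
\[
	\EE\left[e^{-\int\hat f(x,j)\,\Npa_n(\ud x,\ud j)}\right]\longrightarrow
	\EE\left[\exp\left\{-W\int\left(1-e^{-\hat f(x,l)}\right)e^{-x}\,\ud x\,\rho(\ud l)\right\}\right].
\]

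It remains to identify the right-hand side with the Laplace functional of $\Npb$. Since $1-e^{-\hat f(x,l)}=1-\EE[e^{-f(x,Z_l)}]=\EE\big[1-e^{-f(x,Z_l)}\big]$, Fubini's theorem (legitimate because $\rho$ is a finite measure, $\rho(\{0,1,\dots\})=m/(m-1)<\infty$, and $f$ has compact support in its first argument) together with the definition $\rho^*(\ud k)=\sum_{j\ge 0}m^{-j}\PP[Z_j\in\ud k]$ yields
\[
	\int\left(1-e^{-\hat f(x,l)}\right)e^{-x}\,\ud x\,\rho(\ud l)
	= \sum_{l\ge 0}m^{-l}\int\EE\big[1-e^{-f(x,Z_l)}\big]e^{-x}\,\ud x
	= \int\left(1-e^{-f(x,k)}\right)e^{-x}\,\ud x\,\rho^*(\ud k),
\]
so the limit above equals $\EE\big[e^{-\int f(x,j)\,\Npb(\ud x,\ud j)}\big]$. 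Convergence of Laplace functionals on the stated class of test functions then gives $\Npb_n\Rightarrow\Npb$ in $\M((-\infty,\infty]\times\NN)$.

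I expect the main obstacle to be the opening step: the rigorous justification of the distributional representation of $\Npb_n$ and of the conditional independence of $\{E_{n-|x|}(x)\}_{x\in\mathcal{T}_n}$. This rests on the stopping-line (strong Markov) property of the branching mechanism --- conditionally on the line and the displacements along it, the subtrees hanging below its vertices are independent fresh Galton--Watson trees, so that $E_{n-|x|}(x)$ has the law of $Z_{n-|x|}$ --- together with the fact, already used in Lemma~\ref{lem:4:ao} and Lemma~\ref{lem:4:tea}, that excluding the atypical particles $\mathcal{A}_n$ from the definition of $\mathcal{T}_n$ does not alter the line on the region of space where $f$ is supported. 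The remaining points are routine: the admissibility check for $\hat f$ and the interchange of summation, integration and expectation in the last display.
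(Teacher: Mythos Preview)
Your proposal is correct and follows essentially the same route as the paper: define $\hat f(y,j) = -\log\EE[e^{-f(y,Z_j)}]$ (the paper calls it $f^*$), use the distributional representation with independent Galton--Watson copies to rewrite the Laplace functional of $\Npb_n$ at $f$ as that of $\Npa_n$ at $\hat f$, and then invoke Proposition~\ref{prop:4:banana}. Your write-up is in fact more detailed than the paper's --- you verify the admissibility of $\hat f$ and spell out the identification of the limiting Laplace functional with that of $\Npb$ via $\rho^*$, both of which the paper leaves implicit.
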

	\begin{proof}
		We have
		\begin{equation*}
			\int f(x,j) \Npb_n(\ud x, \ud j) \stackrel{d}{=} \sum_{x \in \mathcal{T}_n}  f \left((X_x-b_n)/a_n,Z^{(x)}_{n-|x|} \right) .
		\end{equation*}
		On the event $\{\mathcal{A}_n^{(2)}=\emptyset\}$, for sufficiently large $n$,
		\begin{equation*}
			\sum_{x \in \mathcal{T}_n}  f \left((X_x-b_n)/a_n,Z^{(x)}_{n-|x|} \right) =\sum_{|x| \leq n}  f \left((X_x-b_n)/a_n,Z^{(x)}_{n-|x|} \right) .
		\end{equation*}
		With the last representation at hand, we can consider
		\begin{equation*}
			f^*(x,j) = - \log \EE[\exp \{ - f(x, Z_{j})\} ]
		\end{equation*}
		and write, by conditioning on $X_x$'s,
		\begin{equation*}
			\EE\left[ \exp \left\{ - \int f(x,j) \Npb_n(\ud x, \ud j) \right\} \right]
			= \EE\left[ \exp \left\{ - \int f^*(x,j) \Npa_n(\ud x, \ud j) \right\} \right].
		\end{equation*}
		Since the function $f^*$ satisfies the hypothesis of Proposition~\ref{prop:4:banana} we conclude the proof.
	\end{proof}

	We are now ready to prove the main proposition.

	\begin{prop}\label{prop:4:tn}
		Under the Assumption~\ref{as:galton} and Assumption~\ref{as:step1},
		\begin{equation*}
			\scale_{a^{-1}_n}\shift_{-b_n} \Tp_n \Rightarrow  \Pp,
		\end{equation*}
		where $\Pp$ is a point process given in~\eqref{eq:2:defV}.
	\end{prop}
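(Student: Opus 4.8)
The plan is to compute the Laplace functional of $\scale_{a^{-1}_n}\shift_{-b_n}\Tp_n$ and show it converges to that of $\Pp$. Fix $f \in C_c^+((-\infty,\infty])$, say with $f$ supported in $[K,\infty]$. The starting point is the identity established just before the proposition: on the event $\{\Acal_n^{(2)}=\emptyset\}$, for $n$ large,
\begin{equation*}
	\sum_{|x|=n} f\!\left(\frac{T(x)-b_n}{a_n}\right) = \sum_{v \in \mathcal{T}_n} f\!\left(\frac{X_v-b_n}{a_n}\right) E_{n-|v|}(v) = \int g(x,j)\,\Npb_n(\ud x,\ud j),
\end{equation*}
where $g(x,j) = j f(x)$. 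Since $\PP[\Acal_n^{(2)}=\emptyset]\to 1$ by Lemma~\ref{lem:4:ao} (or the direct estimate in its proof), this event can be discarded at a cost that vanishes in the limit, so
\begin{equation*}
	\EE\!\left[\exp\left\{-\scale_{a^{-1}_n}\shift_{-b_n}\Tp_n(f)\right\}\right] = \EE\!\left[\exp\left\{-\int g(x,j)\,\Npb_n(\ud x,\ud j)\right\}\right] + o(1).
\end{equation*}

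Next I would apply Proposition~\ref{prop:4:npb} to the function $g(x,j)=jf(x)$. One must check that $g$ meets the hypotheses there: it is continuous and nonnegative on $(-\infty,\infty]\times\NN$, and vanishes for $|x|$ large because $f$ does; it is unbounded in the $j$-variable, but that is allowed since $\NN$ is discrete and the hypothesis only restricts the $x$-support. Hence
\begin{equation*}
	\EE\!\left[\exp\left\{-\int g(x,j)\,\Npb_n(\ud x,\ud j)\right\}\right] \to \EE\!\left[\exp\left\{-W\int\!\!\int \left(1-e^{-jf(x)}\right)e^{-x}\,\ud x\,\rho^*(\ud j)\right\}\right].
\end{equation*}
Now unfold $\rho^*(\ud j) = \sum_{l=0}^\infty m^{-l}\PP[Z_l\in\ud j]$ and interchange sum and integral (justified by nonnegativity, Tonelli) to rewrite the exponent as $-W\sum_{l=0}^\infty m^{-l}\int\EE\!\left[1-e^{-f(x)Z_l}\right]e^{-x}\,\ud x$, which is exactly the logarithm of the Laplace functional of $\Pp$ recorded after~\eqref{eq:2:defV}. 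It remains only to match this with the claimed representation $\Pp = \sum_k A_k\,\delta_{\ell_k-\log(vW)}$; this is precisely the identification already carried out in the text when $\Pp$ was introduced, using $v = \sum_l m^{-l}\PP[Z_l>0]$ and $\PP[A_k=j] = v^{-1}\sum_l m^{-l}\PP[Z_l=j]$, so one may simply cite that computation. This gives $\scale_{a^{-1}_n}\shift_{-b_n}\Tp_n \Rightarrow \Pp$ by the Laplace-functional criterion for weak convergence in $\M(\bR)$ stated in Section~\ref{sec:2:main}.

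I expect the main obstacle to be the bookkeeping around the stopping line $\mathcal{T}_n$: namely, making fully rigorous that on $\{\Acal_n^{(2)}=\emptyset\}$ and for $n$ large the double-counting identity holds with $E_{n-|v|}(v)$ exactly the number of $n$-th generation descendants, and that no particle of $\mathbb{T}_n\setminus\Acal_n$ is missed or counted twice — i.e. that each such $x$ has a unique ancestor $v\leq x$ with $X_v>b_n-z_n$ and $\max_{y<v}X_y\leq b_n-z_n$. Uniqueness is where $\Acal_n^{(2)}=\emptyset$ (at most one displacement along any path exceeds $\delta b_n$, hence at most one exceeds $b_n-z_n$ since $z_n=o(b_n)$) and the removal of $\Acal_n^{(1)}$ (some displacement along the path does exceed $y_n$, hence exceeds $b_n-z_n$ for $n$ large) are used. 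A secondary technical point is the interchange of the infinite sum over $l$ with the $x$-integral and the $n\to\infty$ limit inside Proposition~\ref{prop:4:npb}; both are handled by the monotone/dominated convergence arguments already invoked in the proof of Proposition~\ref{prop:4:banana} together with the summability of $m^{-l}$. Once the combinatorial identity is in place, the rest is a direct application of Proposition~\ref{prop:4:npb} and the algebraic identification of the limiting Laplace functional.
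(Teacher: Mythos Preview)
Your proposal is correct and follows essentially the same route as the paper: both rewrite $\scale_{a_n^{-1}}\shift_{-b_n}\Tp_n(f)$ on $\{\Acal_n^{(2)}=\emptyset\}$ as $\int jf(x)\,\Npb_n(\ud x,\ud j)$ via the stopping-line identity, invoke Proposition~\ref{prop:4:npb} with the test function $g(x,j)=jf(x)$, and then identify the limit with $\Pp$. You are simply more explicit than the paper about verifying the hypotheses on $g$ and about unfolding $\rho^*$ to match the Laplace functional of $\Pp$; the paper compresses this last step into the single remark that $\int jf(x)\,\Npb(\ud x,\ud j)\stackrel{d}{=}\Pp(f)$ ``by construction.''
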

	\begin{proof}
    		Fix $f\in C_c^+(-\infty, \infty]$. As noted previously for sufficiently large $n$, on the event $\{\mathcal{A}_n^{(2)}=\emptyset\}$,
		\begin{equation*}
			\sum_{|x|=n} f\left(\frac{T(x)-b_n}{a_n}\right) =\sum_{v \in \mathcal{T}_n} f\left(\frac{X_v -b_n}{a_n}\right) E_{n-|v|}(v).
		\end{equation*}
		The last sum is equal to
		\begin{equation*}
			\int f(x) j  \: \Npb_n(\ud x, \ud j).
		\end{equation*}
		By an appeal to Proposition~\ref{prop:4:npb},
		\begin{equation*}
			\int f(x) j  \: \Npb_n(\ud x, \ud j) \to^d \int f(x) j  \: \Npb(\ud x, \ud j).
		\end{equation*}
		The result follows, since by construction
 		\begin{equation*}
			\int f(x) j  \: \Npb(\ud x, \ud j)\stackrel{d}{=} \int f(x)   \: \Pp(\ud x).
		\end{equation*}
%
%
%
	\end{proof}

	We are now in position to give a final touch to the proof of our first main result.

	\begin{proof}[Proof of Theorem~\ref{thm:2:1}]
		At this point it suffices to approximate $\Pp_n$ via $\Tp_n$. Fix $\varepsilon>0$.
		Any function $f$ from $C_c(-\infty, \infty]$ is uniformly continuous. One can thus
		has
		\begin{equation*}
			\omega_f(\delta) = \sup \{ |f(x)- f(y)| \: : \: |x-y| <\delta \} \to 0
		\end{equation*}
		as $\delta\to 0^+$.
		 Note that
		\begin{multline*}
			\left|  \int f(x) (\Pp_n-\Tp_n)(\ud x)\right| \leq  \\ \int f(x) \Pp_n^{(\mathcal{A})}(\ud x)
		+ \omega_f\left( M_n^{(\mathcal{A})}\right) \Tp_n\left[{\rm supp}(f)+\left(-M_n^{(\mathcal{A})}, M_n^{(\mathcal{A})}\right)\right].
		\end{multline*}
		By an appeal to Lemma~\ref{lem:4:ao} the first term vanishes. The second term vanishes as well by
		Proposition~\ref{prop:4:tn} and Lemma~\ref{lem:4:tea}.
	\end{proof}

\subsection{The sublogarithmic case}
	The arguments go along the lines of the proof of Theorem~\ref{thm:2:1}. We will therefore skip some of the more simpler arguments.
	Firstly recall the generalised inverse of $L$ given via
	\begin{equation*}
		L^{-1}(y) = \inf \{ s \in \RR \: : \: L(s) \geq y\}.
	\end{equation*}
	Since $L$ is right continuous, $L(x)\geq y$ if and only if $x \geq L^{-1}(y)$. Consider
	\begin{equation*}
		\mathcal{B}_n = \{ x \in \mathbb{T}_n \: : \: L(T(x)) \leq n(\log m) /2\}.
	\end{equation*}
	We will first show that the positions of the particles from $\mathcal{B}_n$ do not contribute to the limit of $\Pp_n$.

	\begin{lem}
		Under the Assumption~\ref{as:galton} and Assumption~\ref{as:step2},
		\begin{equation*}
			\sum_{x \in \mathcal{B}_n} \delta_{L(V(x)) - n\log m } \Rightarrow o
		\end{equation*}	
		in $\M(\bR)$. 
	\end{lem}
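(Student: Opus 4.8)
The plan is to run the analogue of Lemma~\ref{lem:4:ao}, with Lemma~\ref{lem:3:trunk} playing the role of Lemma~\ref{lem:3:small}. By the Laplace-functional criterion for vague convergence recalled in Section~\ref{sec:2:main}, and since every $f \in C_c^+(\bR)$ is supported in a set of the form $[K,\infty]$, it suffices to show that for each $K \in \RR$
\begin{equation*}
	\PP\big[\, \exists\, x \in \mathcal{B}_n \: : \: L(V(x)) - n\log m > K \,\big] \longrightarrow 0, \qquad n \to \infty.
\end{equation*}

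To this end I would set $c_n = L^{-1}(n\log m + K)$ and $d_n = L^{-1}\big(n(\log m)/2 + 1\big)$. Since $X \ge 0$, on $\{x \in \mathcal{B}_n\}$ each displacement along $(\emptyset,x]$ is at most $T(x)$, and $L(T(x)) \le n(\log m)/2$ forces $T(x) \le d_n$ by right-continuity of $L$, hence $V(x) \le n d_n$; likewise $\{ L(V(x)) > n\log m + K\} \subseteq \{V(x) \ge c_n\}$. Writing $S_n$ and $N_n$ for the random walk and the running maximum of $n$ iid copies of $X$, a first-moment (many-to-one) bound then gives
\begin{equation*}
	\PP\big[\, \exists\, x \in \mathcal{B}_n \: : \: L(V(x)) - n\log m > K \,\big] \le m^n \, \PP\big[\, S_n \ge c_n, \ N_n \le d_n \,\big].
\end{equation*}
For the right-hand side I would apply the exponential Chebyshev bound with $s = \gamma L(d_n)/d_n$, $\gamma \in (0,1)$:
\begin{equation*}
	\PP\big[\, S_n \ge c_n, \ N_n \le d_n \,\big] \le e^{-s c_n}\, \EE\big[ e^{sX}\ind_{\{X \le d_n\}} \big]^n,
\end{equation*}
and invoke Lemma~\ref{lem:3:trunk} (with $y = d_n$) to get $\EE[ e^{sX}\ind_{\{X\le d_n\}} ] \le 1 + (1+o(1))L(d_n)^{(1-1/\xi)/2}$. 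Since $L(d_n) \ge n(\log m)/2 \to \infty$ and $(1-1/\xi)/2 < 0$, the right-hand side tends to $1$; and because $\xi < 1$ gives $(3-1/\xi)/2 < 1$, raising to the $n$-th power produces only $\EE[ e^{sX}\ind_{\{X\le d_n\}} ]^n \le \exp(o(n))$, exactly as in the proof of Lemma~\ref{lem:3:small}.

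The decisive point is that $c_n / d_n \to \infty$, which is the manifestation of the fact that a particle with no single displacement near the critical level $L^{-1}(n\log m)$ cannot reach level $n\log m + K$ on the $L$-scale. This follows from slow variation of $L$: if $c_n \le A d_n$ held along a subsequence for some fixed $A$, then $n\log m + K \le L(c_n) \le L(A d_n) = (1+o(1))L(d_n) = (1/2 + o(1))\, n\log m$, which is impossible for large $n$. Consequently $s c_n = \gamma L(d_n)\,(c_n/d_n) \ge \gamma\,(n\log m/2)\,(c_n/d_n)$ for large $n$, so $s c_n/n \to \infty$, and combining the displays,
\begin{equation*}
	m^n \, \PP\big[\, S_n \ge c_n, \ N_n \le d_n \,\big] \le \exp\big\{ n\log m + o(n) - s c_n \big\} = \exp\big\{ n\big( \log m - s c_n/n + o(1) \big) \big\} \to 0,
\end{equation*}
which is the claim. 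The only genuine subtleties I anticipate are this rapid-variation comparison, the bookkeeping around the generalized inverse $L^{-1}$ when $L$ has jumps or flat stretches, and verifying the monotonicity hypothesis of Lemma~\ref{lem:3:trunk} under Assumption~\ref{as:step2} (harmless in view of Potter's bounds for slowly varying functions); everything else is the union-bound-plus-truncated-Laplace-transform routine already deployed in the suplogarithmic case.
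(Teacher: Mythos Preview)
Your overall strategy coincides with the paper's: reduce to a first-moment bound $m^n\PP[S_n\ge c_n,\:N_n\le d_n]$, apply exponential Chebyshev with $s$ of order $L(d_n)/d_n$, bound the truncated moment generating function, and then exploit the rapid variation of $L^{-1}$ (your $c_n/d_n\to\infty$, the paper's $\varpi_n/\ell_n\to\infty$) to kill everything. The argument for $c_n/d_n\to\infty$ via slow variation is exactly the mechanism the paper uses.

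The one genuine gap is your appeal to Lemma~\ref{lem:3:trunk}. Its hypothesis that $t\mapsto t^{-\xi}L(t)$ be eventually decreasing is \emph{not} part of Assumption~\ref{as:step2}, and it cannot be ``verified'' from slow variation and $L(x)=o(\log x)$ alone: a slowly varying, nondecreasing $L$ need not satisfy any such monotonicity. What Potter's bounds \emph{can} do is substitute for that hypothesis inside the proof of Lemma~\ref{lem:3:trunk} (the two places it is used are the lower bounds $L(y/L(y)^{1+\varepsilon})\ge L(y)^{(1-\xi)/2}$ and $L(ty)\ge t^\xi L(y)$ for $t\in(0,1)$, both of which have Potter analogues up to harmless constants). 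So your parenthetical is pointing at the right fix, but it is not an invocation of the lemma as stated; it is a re-derivation. The paper does precisely this re-derivation, but more coarsely: it splits $\EE[e^{sX}\ind_{\{X\le \ell_n\}}]$ into the ranges $[0,\ell_n/n]$, $[\ell_n/n,\varepsilon\ell_n]$, $[\varepsilon\ell_n,\ell_n]$, uses Potter bounds directly on each, and is content with the bound $\EE[e^{sX}\ind_{\{X\le \ell_n\}}]\le \const$ rather than $1+o(1)$. That cruder bound costs a factor $e^{\const n}$ instead of your $e^{o(n)}$, but this is immaterial since $s c_n$ (equivalently $s\varpi_n$) grows superlinearly in $n$. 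In short: your route via Lemma~\ref{lem:3:trunk} needs the lemma to be re-proved under Potter bounds, and once you do that you have essentially rewritten the paper's direct estimate.
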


	\begin{proof}
		As in the proof of Lemma~\ref{lem:4:ao} it is sufficient to show that for any $C \in \RR$,
		\begin{equation*}
			\PP[L(S_n) > n \log m +C, \:  L(N_n) \leq  3n(\log m) /4] = o(m^{-n}).
		\end{equation*}
		We will once again apply Lemma~\ref{lem:4:ao}. Denote $\ell_n = L^{-1}(3n(\log m) /4)$, $\varpi_n =L^{-1}(n \log m  +C)$. Note that
		$\ell_n = o(\varpi_n)$. Consider $s = n\log m / (2\ell_n)$ and write
		\begin{multline}\label{eq:4:cookie}
			\PP[L(S_n) > n\log m  +C, \:  L(N_n) \leq  3n(\log m) /4] \leq \\ e^{-  s \varpi_n } \EE\left[e^{sX} \ind_{\{ X \leq \ell_n \}}\right]^n.
		\end{multline}
		We will present a suitable bound for the integral which appears on the right hand side.
		First note that
		\begin{equation*}
			\EE\left[e^{sX} \ind_{\{ X \leq \ell_n/n \}}\right] \leq m.
		\end{equation*}
		For any $\varepsilon>0$ consider
		\begin{multline*}
			\EE\left[e^{sX} \ind_{\{ \ell_n/n \leq  x \leq \varepsilon \ell_n \}}\right]
			\leq \int_{\ell_n/n}^{\varepsilon \ell_n} s e^{sy} \PP[X>y] \ud y + e \PP[X>\ell_n/n] \\
			\leq \int_{1/n}^{\varepsilon}  n \log m e^{s\ell_nt - L(\ell_n t)} \ud t +O(1).
		\end{multline*}
		Using Potter bounds,
		\begin{equation*}
			L(\ell_n t) \geq t^{\delta}L(\ell_n) \geq \varepsilon^{\delta-1} t L(\ell_n)
		\end{equation*}
		for $t \in [1/n, \varepsilon]$. If we plug this into the integral we will get, with sufficiently small $\varepsilon$,
		\begin{equation*}
			\EE\left[e^{sX} \ind_{\{ \ell_n/n \leq  x \leq \varepsilon \ell_n \}}\right]  = o(1).
		\end{equation*}
		Finally, for the last part write
		\begin{multline*}
			\EE\left[e^{sX} \ind_{\{ \varepsilon \ell_n \leq  x \leq \ell_n \}}\right]  \leq \int_{\varepsilon \ell_n}^{\ell_n} s e^{sy} \PP[X>y] \ud y + e^{\varepsilon s \ell_n } \PP[X>\varepsilon \ell_n]  \\
				 =\int_{\varepsilon}^1  n \log m  e^{s\ell_nt - L(\ell_n t)} \ud t +o(1)
		\end{multline*}
		and the last integral converges to $0$ exponentially fast by the merit of uniform slow variation of $L$. This constitutes
		\begin{equation*}
			\EE\left[e^{sX} \ind_{\{ x \leq \ell_n \}}\right] \leq C.
		\end{equation*}
		If we now go back to~\eqref{eq:4:cookie}, we get
		\begin{multline*}
			\PP[L(S_n) > n \log m +C, \:  L(N_n) \leq  n(\log m) /2] \\
				\leq \exp\{ -n \log m \varpi_n/(2\ell_n) + Cn\}.
		\end{multline*}
		Since $\varpi_n/\ell_n \to \infty$ our claim follows.
	\end{proof}

	We will now state the result that allows to approximate $\Pp_n$ via $\Tp_n$. Define
	\begin{equation*}
		M_n^{(B)} = \max \{ L(V(x))-L(T(x)) \: : \: |x|=n , \: L(T(x)) > 3n(\log m )/4\}.
	\end{equation*}

\begin{lem}
	Let the Assumption~\ref{as:galton} and Assumption~\ref{as:step2} be satisfied. Then, as $n \to \infty$,
	\begin{equation*}
		M_n^{(B)} 
\convprob 0.
	\end{equation*}
\end{lem}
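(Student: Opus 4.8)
The argument parallels the suplogarithmic case: one approximates $\Pp_n$ by $\Tp_n$ by bounding $|L(V(x))-L(T(x))|$ along the relevant lines, with the estimate for $M_n^{(B)}$ playing the role of Lemma~\ref{lem:4:tea}.

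First I would reduce to a single ancestral line. Since the genealogy is independent of the displacements, writing $V(x)=X_v+\sum_{y\le x,\,y\ne v}X_y$ with $v$ the vertex carrying the maximal displacement on the line to $x$ and simply discarding the constraint that $X_v$ be maximal, a union bound over the (at most $m^n$) particles of generation $n$ and the $n$ vertices on each line yields, for every $\varepsilon>0$,
\[
 \PP\big[M_n^{(B)}>\varepsilon\big]\ \le\ n\,m^n\,\PP\big[\,X>c_n,\ L(X+S_{n-1})-L(X)>\varepsilon\,\big],\qquad c_n:=L^{-1}\big(\tfrac34 n\log m\big),
\]
with $S_{n-1}$ an independent $(n-1)$-step random walk generated by $X$ and $L^{-1}$ the generalised inverse. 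So it suffices to show the right-hand side tends to $0$.

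Next, on $\{X>c_n\}$ the event $\{L(X+S_{n-1})-L(X)>\varepsilon\}$ forces $X$ to lie just below a place where $L$ increases by at least $\varepsilon$; more precisely $X\in\bigcup_{p}(p-S_{n-1},p)$, the union taken over points $p>c_n$ at which $L$ has an upward jump (or a rapid local increase) of size $\ge\varepsilon$ — the continuous ``slowly varying drift'' of $L$ over an interval of length $S_{n-1}$ contributes $o(\varepsilon)$ once $X>c_n$, as $c_n$ is enormous. Conditioning on $S_{n-1}=s$ up to a suitable super-polynomial truncation level (the complementary tail of $S_{n-1}$ being negligible after multiplication by $n m^n$, since $\PP[S_{n-1}>M]\lesssim n\,e^{-L(M/n)}$), one bounds $\PP[X\in\bigcup_p(p-s,p),\,X>c_n]\le\sum_{p>c_n}\big(e^{-L(p-s)}-e^{-L(p^-)}\big)$. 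Here every such $p$ has $L(p^-)\ge L(c_n)=\tfrac34 n\log m$, so the series is dominated by its first term and carries a factor $e^{-L(p^-)}\le m^{-3n/4}$; and, $L$ being slowly varying with $s$ small relative to $p$, the oscillation $L(p^-)-L(p-s)$ is $\lesssim sL(p)/c_n$, i.e.\ small of order $s/c_n$. Integrating in $s$ against the law of $S_{n-1}$ (using $\EE[S_{n-1}\ind_{\{S_{n-1}\le M\}}]\le n\int_0^M e^{-L(t)}\,dt$) and multiplying by $n m^n$, one finds that everything tends to $0$, because $c_n$ — being super-polynomially large, as $\log c_n/n\to\infty$ by $L(x)=o(\log x)$ — absorbs the surviving polynomial and exponential factors.

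The main obstacle is exactly this last bookkeeping: turning the hypothesis $L(x)=o(\log x)$ into quantitative control of $c_n=L^{-1}(\tfrac34 n\log m)$ (and of $L^{-1}$ at nearby arguments, e.g.\ $L(c_n/n)\sim L(c_n)$ and $\int_0^{M}e^{-L}\lesssim M e^{-L(M/2)}$) sharp enough to beat both the union-bound factor $m^n$ and the truncation error for $S_{n-1}$. One cannot shortcut this by a deterministic estimate such as ``$L(nt)-L(t)\to 0$ uniformly for $t\ge c_n$'': this is \emph{false} in general, since $L$ may carry a sparse but infinite family of jumps (already for $L$ a staircase perturbation of $t\mapsto\sqrt{\log t}$), so the probabilistic point — that $X$ is unlikely to fall into the narrow windows below those jumps — is genuinely needed. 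These are quantitative slow-variation estimates of the same nature as those already used in Section~\ref{sec:3:RWestimates} (Potter bounds, uniform slow variation); with them in hand the remaining inequalities are routine, and the identical scheme then proves $M_n^{(B)}\convprob 0$.
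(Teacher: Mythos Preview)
Your overall architecture (union bound to a single line, then control $L(X+S_{n-1})-L(X)$ on $\{X>c_n\}$) is right, but the execution takes a detour that creates real gaps. The main problem is your decomposition of $L$ into ``jumps'' and a ``continuous slowly varying drift''. Potter bounds apply to \emph{every} slowly varying function, jumps or not: for any $\delta\in(0,1)$ one has $L(x+y)\le C L(x)(1+y/x)^\delta$, hence $L(x+y)-L(x)\le C\delta\,L(x)\,y/x$ for all $x,y>0$ large. There is therefore no need to isolate jump points, and the jump part of your argument (the sum over $p$, the claim that ``the series is dominated by its first term'', the bound on $L(p^-)-L(p-s)$) is both unnecessary and, as written, not justified.

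A second gap is the truncation. You assert that the tail $\{S_{n-1}>M\}$ is negligible after multiplication by $n m^n$; this forces $L(M/n)\gtrsim n\log m$, i.e.\ $M\gtrsim n\,L^{-1}(n\log m)$, which is \emph{larger} than $c_n=L^{-1}(\tfrac34 n\log m)$, destroying the drift estimate. What actually works is to use the factor $\PP[X>c_n]=m^{-3n/4}$ as well and truncate at a smaller $M$, but then one must verify $n\,M\,\log c_n = o(c_n)$ from $L(x)=o(\log x)$. The paper does exactly this, in a cleaner form: from the Potter inequality and $L(x)\le \log x$ one gets directly
\[
\{\,L(S_n)-L(N_n)>\varepsilon,\ N_n>s_1\,\}\ \subseteq\ \bigl\{\,S_n^o>\varepsilon\,s_1/(\delta\log s_1)\,\bigr\},\qquad s_1=L^{-1}(\tfrac34 n\log m),
\]
and then uses the Karamata representation together with $L(x)=o(\log x)$ to show $s_1/\log s_1\gg s_2:=L^{-1}(\tfrac23 n\log m)$. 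The event on the right then forces a second jump of size $>s_2$, and a two-big-jumps bound gives $\PP\le Cn^2 m^{-n(3/4+2/3)}$, which after the union factor $m^n$ yields $n^2 m^{-5n/12}\to 0$. Your route, correctly cleaned up, collapses to this same argument; the jump analysis should be dropped.
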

\begin{proof}

	We recall that $\{ S_n \}_{n \in \NN}$ is a random walk generated by $X$ and 
$N_n = \max_{k \leq n}X_k$.
	Using Potter bounds for any $\delta>0$, $x\in \RR$ and $y>0$,
	\begin{align*}
		L(x+y)-L(x) & \leq C L(x) \left( \left(1+\frac yx \right)^{\delta} -1 \right).
	\end{align*}
	The right-hand side, by the Bernoulli inequality, is further bounded via $C L(x) \delta  y/x$.
	Thus for $S_n^o = S_n-N_n$ we have, for sufficiently large $n$,
	\begin{equation*}
		\{ L(S_n) - L(N_n) >\varepsilon, \: N_n >  L^{-1}(3n \log m /4) \} \subseteq \left\{ S_n^o > \varepsilon s_1/(\delta \log s_1) \right\},
	\end{equation*}
	where $s_1= L^{-1}(3n(\log m) /4) $.
	Now we will show that if $L(x) = o(\log x)$ then
	\begin{equation}\label{eq:4:claim}
		\lim_{n \to \infty}\frac{s_1/\log s_1 }{s_2 } =\infty \qquad \text{where $s_2 = L^{-1}(2n(\log m) /3)$.}
	\end{equation}
	By Karamata representation
	\begin{equation*}
		L(t) = c(t) \exp \left\{ \int_1^t \varepsilon(x)/x \ud x \right\}
	\end{equation*}
	for $c(t) \to c >0$ and $\varepsilon(x) \to 0$.
	Since $L(x) = o(\log x)$, we must have
	\begin{equation*}
		\varepsilon(z) \leq 1/\log z
	\end{equation*}
	for sufficiently large $z$.
	Then
	\begin{align*}
		3n (\log m) /4 \leq L(s_1) & = L(s_2/2) c(s_1)/c(s_2/2) \exp \left\{ \int_{s_2/2}^{s_1} \varepsilon(z)/z \ud z \right\}\\
			& \leq L(s_2/2) c(s_1)/c(s_2/2) \frac{\log s_1}{\log(s_2/2)}.
	\end{align*}
	This leads to
	\begin{equation*}
		(1+o(1))9/8 \leq  \frac{\log s_1}{\log s_2}
	\end{equation*}
	which means that for sufficiently large $n$, by the monotonicity of the function $x \mapsto x/\log x$,
\begin{equation*}
		\frac{s_2^{1/8}}{9 (\log s_2)/8} \leq \frac{s_1/\log s_1}{s_2}	.
	\end{equation*}
	Since $s_2$ grows faster than exponentially in $n$ this secures~\eqref{eq:4:claim}.
	We infer that
	\begin{multline*}
		\PP \left[L(S_n) - L(N_n) >\varepsilon, \: N_n >  L^{-1}(3n (\log m) /4) \right] \leq \\
			\PP[ N_n >  L^{-1}(3n (\log m)/4) , S_n^o > n L^{-1}(2n (\log m )/3)]\leq C n^2 m^{-n(2/3+3/4)}.
	\end{multline*}
	The lemma now follows by a first moment argument since for any $\varepsilon>0$,
	\begin{align*}
		\PP[M_n^{(B)} > \varepsilon] & \leq m^n \PP \left[L(S_n) - L(N_n) >\varepsilon, \: N_n >  L^{-1}(3n(\log m)/4) \right]\\
			&  \leq n^2 m^{-5n/12}.
	\end{align*}
\end{proof}

\begin{proof}[Proof of Theorem~\ref{thm:L:sublog}]
	We apply the same arguments as in the proof of our first result. This time we consider the stopping line
	\begin{multline}\label{eq:4:line2}
		\mathcal{S}_n =
		\Big\{ |v|\leq n \: : \: \exists x \in \mathbb{T}_n\setminus \mathcal{B}_n, \: x \geq v, \\
			 \max_{y<v}L(X_y) \leq  3n (\log m) /4, \: L(X_v)> 3n (\log m) /4 \Big\}.
	\end{multline}
	Now for any compactly supported and continuous $f$, with high probability
	\begin{equation*}
		\sum_{|x|=n} f\left(L(T(x)) - n \log m \right) =\sum_{v \in \mathcal{T}_n} f\left(L(X_v) - n \log m \right) E_{n-|v|}(v).
	\end{equation*}
	One then uses the same procedure taking into account that
	\begin{equation*}
		m^n \PP[L(X)-n \log m \in \ud x] \to^\nu e^{-x}\ud x.
	\end{equation*}
\end{proof}

\bibliographystyle{abbrv}
\bibliography{BRWwithSVT}

\end{document}